\renewcommand\eqref[1]{(\ref{#1})} %Need with hyperref
\numberwithin{equation}{section}
\theoremstyle{plain}
\newtheorem{theorem}{Theorem}[section]
\newtheorem{lemma}[theorem]{Lemma}
\theoremstyle{definition}
\newtheorem{definition}[theorem]{Definition}
\newtheorem{remark}[theorem]{Remark}
\begin{document}
\title[Parabolic equations with convolution nonlinearities]
{Parabolic problems whose Fujita critical exponent is not given by scaling}

\author[A. Z. Fino]{Ahmad Z. Fino}
\address{
  Ahmad Z. Fino
  \endgraf College of Engineering and Technology, \endgraf American University of the Middle East, Kuwait.}
\email{ahmad.fino@aum.edu.kw}
  
\author[B. T. Torebek ]{Berikbol T. Torebek} \address{Berikbol T. Torebek  \endgraf Institute of
Mathematics and Mathematical Modeling \endgraf 28 Shevchenko str.,
050010 Almaty, Kazakhstan.} \email{torebek@math.kz}

\thanks{Corresponding author: Berikbol T. Torebek, email: torebek@math.kz}

\keywords{Convolution operator; Riesz potential; parabolic equation; critical exponent; blow-up; fractional Laplacian}

\subjclass{35K58, 35B33, 35A01, 35B44}

\begin{abstract} 
This paper investigates the (fractional) heat equation with a nonlocal nonlinearity involving a Riesz potential:
\begin{equation*}
u_{t}+(-\Delta)^{\frac{\beta}{2}} u= I_\alpha(|u|^{p}),\qquad x\in \mathbb{R}^n,\,\,\,t>0,
\end{equation*}
where $\alpha\in(0,n)$, $\beta\in(0,2]$, $n\geq1$, $p>1.$
We introduce the Fujita-type critical exponent $p_{\mathrm{Fuj}}(n,\beta,\alpha)=1+(\beta+\alpha)/(n-\alpha)$, which characterizes the global behavior of solutions: global existence for small initial data when $p>p_{\mathrm{Fuj}}(n,\beta,\alpha),$ and finite-time blow-up when $p\leq p_{\mathrm{Fuj}}(n,\beta,\alpha)$.

It is remarkable that the critical Fujita exponent is not determined by the usual scaling argument that yields $p_{sc}=1+(\beta+\alpha)/n$, but instead arises in an unconventional manner, similar to the results of Cazenave et al. [Nonlinear Analysis, 68 (2008), 862-874] for the heat equation with a nonlocal nonlinearity of the form $\int_0^t(t-s)^{-\gamma}|u(s)|^{p-1}u(s)ds,\,0\leq \gamma<1.$

The result on global existence for $p>p_{\mathrm{Fuj}}(n,2,\alpha),$ provides a positive answer to the hypothesis proposed by Mitidieri and Pohozaev in [Proc.
Steklov Inst. Math., 248 (2005) 164–185]. We further establish global nonexistence results for the above heat equation, where the Riesz potential term $I_\alpha(|u|^{p})$ is replaced by a more general convolution operator $(\mathcal{K}\ast |u|^p),\,\mathcal{K}\in L^1_{loc}$, thereby extending the Mitidieri–Pohozaev's results established in the aforementioned work. 

Proofs of the blow-up results are obtained using a nonlinear capacity method specifically adapted to the structure of the problem, while global existence is established via a fixed-point argument combined with the Hardy–Littlewood–Sobolev inequality.
\end{abstract}

\maketitle
\tableofcontents
\section{Introduction}

The present paper is devoted to the study of the Fujita critical exponent for an evolution equation involving a Riesz potential term coupled with a power-law nonlinearity
\begin{equation}\label{44}
\left\{\begin{array}{ll}
u_{t}+(-\Delta)^{\frac{\beta}{2}} u= I_\alpha(|u|^{p}),&\qquad x\in \mathbb{R}^n,\,\,\,t>0,\\\\
 u(x,0)=u_{0}(x),& \qquad x\in \mathbb{R}^n,\\
 \end{array}
 \right.
\end{equation}
where $\alpha\in(0,n)$, $\beta\in(0,2]$, $n\geq1$, $p>1$, and the initial data $u_0\in L^\infty(\mathbb{R}^n)$. Here, for $\beta\in(0,2)$, $(-\Delta)^{\beta/2}$  denotes the fractional Laplacian, and  $I_\alpha$ is the Riesz potential of order $\alpha$, given for $f\in L^1_{\mathrm{loc}}(\mathbb{R}^n)$ by
$$I_\alpha f(x) :=A_\alpha(|x|^{-(n-\alpha)}\ast f)=A_\alpha\int_{\mathbb{R}^n}\frac{f(y)}{|x-y|^{n-\alpha}}\,dy,$$
with normalization constant $A_\alpha= \frac{\Gamma\left( \frac{n - \alpha}{2} \right)}{\Gamma\left( \frac{\alpha}{2} \right)\pi^{\frac n2}2^\alpha}$. Moreover, the identity
 $$-\Delta I_2 = \delta,$$
holds, where \( \delta \) denotes the Dirac delta distribution. This implies that \( I_2 \) serves as the Green's function of the Laplacian \( -\Delta \) on \( \mathbb{R}^n \). More generally, the Riesz potential \( I_\alpha \) can be interpreted as the inverse of the fractional Laplacian operator, in the sense that
$$(-\Delta)^{-{\alpha}/{2}}f(x)=I_\alpha f(x).$$
 For further details on these fundamental properties of Riesz potentials, we refer the reader to \cite[Section~I.1]{Landkof}, \cite[Section~V.1.1]{Stein}.\\
In addition, as \( \alpha \to 0 \), the Riesz potential $I_\alpha$ converges to the Dirac delta $\delta$ in the vague sense \cite[p.~46]{Landkof}. On the other hand, as \( \alpha \to n \), one has
\[
I_\alpha  f \to A_n \log \left( \frac{1}{|x|} \right) * f,
\]
for every \( f \in C^\infty_c(\mathbb{R}^n) \) satisfying \( \displaystyle\int_{\mathbb{R}^n} f = 0 \), where the constant $A_n$ is given by
\[
A_n = \lim_{\alpha \to n} (n - \alpha)A_\alpha = \frac{1}{\Gamma\left( \frac{n}{2} \right)\pi^{n/2} 2^{n-1}} \quad \text{\cite[p.~50]{Landkof}}.
\]
The definition of Riesz potentials can be extended from \( \alpha \in (0, n) \) to arbitrary complex \( \alpha \) with \( \mathrm{Re}(\alpha) > 0 \) and \((\alpha-n)/2 \notin \mathbb{N} \cup \{0\} \). In this more general setting, the convolution \( I_\alpha f \) must be interpreted in the sense of distributions, see for instance \cite{Landkof} or \cite[Chapter~2]{Samko}. Throughout this paper, we restrict to the case \( \alpha \in (0, n) \), where the convolution \( I_\alpha f \) is understood in the classical Lebesgue integral sense.

\subsection{Historical background}
In this subsection, we will provide historical background related to the problem \eqref{44}.
\subsubsection{Classical semilinear heat equations} In 1966 (see \cite{Fuj}) Fujita considered the semilinear parabolic Cauchy problem \begin{equation}\label{heat}
\left\{
\begin{array}{ll}
\,\,\displaystyle{u_t-\Delta u=u^{p},}&\displaystyle {x\in {\mathbb{R}^n},\;t>0,}\\
\\
\displaystyle{u(x,0)=u_0(x),}&\displaystyle{x\in {\mathbb{R}^n},}
\end{array}
\right. \end{equation}
to discuss the conditions for the existence of the global positive solutions and proved that: 
\begin{itemize}
    \item[(i)] for any nonnegative $u_0$ the Cauchy problem \eqref{heat} possesses no global positive solutions if $$1<p<p_{\mathrm{Fuj}}(n)=1+\frac{2}{n};$$ 
    \item[(ii)] there exists a positive global solution of \eqref{heat} if $p>p_{\mathrm{Fuj}}(n)$ and $u_0\geq 0$ is smaller than a Gaussian.\end{itemize}
Moreover, the critical case  $p=p_{\mathrm{Fuj}}(n)$ was considered by Hayakawa \cite{11} for $n=1,2$, Sugitani \cite{22} for $n\geq3$. They have been established that the $p=p_{\mathrm{Fuj}}(n)$  also belongs to case (i).
The number $p_{\mathrm{Fuj}}(n)=1+2/n$ is called the critical exponent in the sense of Fujita. 

Later, in \cite{Wes81} Weissler proved that there exists a global solution of the problem \eqref{heat}, if $p>p_{\mathrm{Fuj}}(n)$ and $\|u_0\|_{L^{p_c}(\mathbb{R}^n)}$ is sufficiently small with $p_c=n(p-1)/2>1.$ Mitidieri and Pohozaev in \cite{PM1} extended the results of Fujita, by replacing $u^p$ with $|u|^p$ and $u_0\geq 0$ with $$\liminf\limits_{R\rightarrow\infty}\int_{|x|\leq R}u_0(x)dx\geq 0,$$ and proved that the problem \eqref{heat} possesses no global sign-changing solutions if $p\leq p_{\mathrm{Fuj}}(n)$. Almost simultaneously, Zhang \cite{Zhang} established the same results for $\int_{\mathbb{R}^n}u_0(x)dx> 0$ and $p\leq p_{\mathrm{Fuj}}(n).$
\subsubsection{Fractional extensions}
If, instead of the classical Laplacian operator $-\Delta$ in \eqref{heat}, one considers its fractional power $(-\Delta)^{\frac{\beta}{2}}$ with $\beta\in(0,2)$, that is, 
\begin{equation}
\label{fheat}
u_t + (-\Delta)^{\frac{\beta}{2}} u = u^p, \qquad x \in \mathbb{R}^n,\, t > 0,
\end{equation}
then the corresponding Fujita critical exponent for problem \eqref{fheat} is given by $$p_{\mathrm{Fuj}}(n,\beta)=1+\frac{\beta}{n}.$$ The result was first established by Nagasawa et al. \cite{Naga}; the critical case was later proved by Sugitani \cite{22}, and further extensions were developed in \cite{Fino5, Guedda, Kirane-Guedda}. 

\subsubsection{Parabolic problems with convolution nonlinearities} In \cite{CDW} Cazenave et al. studied the semilinear heat equation with time-nonlocal convolution nonlinearity
\begin{equation}\label{C2}
\begin{array}{ll}
\displaystyle u_t-\Delta u=\int_0^t(t-s)^{-\gamma}|u(s)|^{p-1}u(s)\,ds,&\qquad {x\in \mathbb{R}^n,\,\,\,t>0.}\end{array}
\end{equation}
In particular, they proved that the critical exponent ensuring global existence of solutions to equation \eqref{C2} is given by $$p_{\mathrm{Fuj}}(n,\gamma)=\max\Big\{\frac{1}{\gamma},p_\gamma\Big\}\in(0,+\infty],\qquad\hbox{with}\quad p_\gamma=1+\frac{2(2-\gamma)}{(n-2+2\gamma)_+},$$ 
that is, it has been proven that: if $p \leq p_{\mathrm{Fuj}}(n,\gamma)$, and $u_0$ is nonnegative, then $u$ blows up in finite time, while there exists a global positive solution for sufficiently small $u_0\geq 0$, if $p>p_{\mathrm{Fuj}}(n,\gamma).$ Further developments and generalizations of the above result were examined in many papers (see, for example, \cite{Fino5, KFTS, Loayza}).

Problem \eqref{44} in the case $\beta = 2$ was first studied by Mitidieri and Pohozaev in \cite{PM2}. They proved that if \begin{equation}\label{PM1}    
\liminf\limits_{R\rightarrow\infty}R^{-\alpha}\int_{B_R}u_0(x)dx\geq 0\,\,\,\, \text{and}\,\,\,\,p \leq 1+\frac{2+\alpha}{n-\alpha},\end{equation} then problem \eqref{44} has no nontrivial weak solutions. Here, $B_R$ stands for the closed ball centered at O with radius $R.$
However, the case $$p > 1+(2+\alpha)/(n-\alpha)$$ was not explored in that paper, and the authors \underline{\bf conjectured} that (see \cite[Remark 7]{PM2}) when $p > 1+(2+\alpha)/(n-\alpha),$ there exists a global solution corresponding to initial data $u_0,$ which satisfies $$u_0(x)\leq c(1+|x|)^{-\gamma},\,c>0,\,x\in\mathbb{R}^n,$$ for $\gamma>(2+\alpha)/(p-1)$.

Several years ago, in \cite{Filippucci1, Filippucci2} Filippucci and Ghergu investigated the heat equation with the nonlocal nonlinearity $I_{\alpha}(|u|^{p})|u|^{q}$ and proved that the considered problem admits no global weak solutions whenever the initial data satisfy $$u_0\in L^1(\mathbb{R}^n), \int_{\mathbb{R}^n}u_0(x)\,dx>0$$ and $$2<p+q\leq1+\frac{\alpha+2}{2n-\alpha},\, p,q>0.$$ This result was recently extended in \cite{FT25} by the authors of this paper to the fractional Laplacian
$(-\Delta)^{\frac{\beta}{2}},\,\beta\in (0,2],$ and the nonexistence of global solutions was established for $2<p+q<1+(\alpha+\beta)/n,\,p,q>0$, thereby improving the previous result obtained in the case $\beta=2.$

\subsection{Motivation} Motivated by the above results, this paper poses the following natural questions:
\begin{description}
    \item[Question 1]  Is the Mitidieri–Pohozaev conjecture true? This {\bf conjecture} asserts that for $p>1+(2+\alpha)/(n-\alpha)$, there exists a global solution for sufficiently small initial data. If this statement holds, then $1+(2+\alpha)/(n-\alpha)$ represents the critical Fujita exponent for problem \eqref{44} for $\beta=2$, that is, 
    $$p_{\mathrm{Fuj}}(n,2, \alpha)=1+\frac{2+\alpha}{n-\alpha}.$$ Similar questions arise for all $\beta\in(0,2).$
    \item[Question 2] Is it possible to interpret the nonexistence of nontrivial solutions as a finite-time blow-up phenomenon in the case $p\leq 1+(\alpha+\beta)/(n-\alpha)$? In other words, can one assert the existence of a local continuous solution on $(0,T_{\max})$ that blows up as $t\rightarrow T_{\max}$, that is, $$\lim\limits_{t\rightarrow T_{\max}}\|u(\cdot,t)\|_{X}\rightarrow\infty?$$ Here $X$ is a Banach space. It should be noted that in \cite{PM2}, the nonexistence of  nontrivial solutions was demonstrated, which, in the general case, does not necessarily imply blow-up of the solution in finite time.
    \item[Question 3] It is natural to ask whether the results of Mitidieri and Pohozaev can be {\bf extended} to more general kernel $K(x,y)$. In fact, in \cite{PM2}, Mitidieri and Pohozaev considered problem \eqref{44} with a more general kernel satisfying $K^{-1}\in L^1_{loc}$. However, the nonexistence results were established only in the case of the Riesz potential $I_\alpha$. 
\end{description}
The main purpose of the paper is to answer the above questions.

\subsection{Main results}
This subsection presents the main results of the paper: local existence, blow-up behavior, and global existence of solutions to problem \eqref{44}.

Let us start with the definition of the mild solution of \eqref{44}.

\begin{definition}  A function $u\in L^\infty((0,T),X)$ is called a {\bf mild solution} of $\eqref{44}$ if $u$ has the initial data $u_0$ and satisfies the integral equation
	\begin{equation}\label{branda}
		u(t)=S_{\beta}(t)u_0 + \int_{0}^{t}S_{\beta}(t-\tau ) I_\alpha(|u|^{p})(\tau ) \,\mathrm{d}\tau,
	\end{equation}
where $S_{\beta}(t)$ is defined by \eqref{3.3} below.
\end{definition}
Below, we formulate a statement about local existence.
\begin{theorem}[Local existence]\label{localexistence}
	Let $\beta\in(0,2]$, $\alpha\in(0,n)$, $p>n/(n-\alpha)$, and $u_0 \in L^s(\mathbb{R}^{n})\cap L^\infty(\mathbb{R}^{n})$ with $n/(n-\alpha)<s<n(p-1)/\alpha$. Then there exists a time $T=T(u_0)>0$ such that problem \eqref{44} possesses a unique mild solution 
	$$u\in C([0,T],L^s(\mathbb{R}^n))\cap L^\infty((0,T),L^\infty(\mathbb{R}^{n})).$$
	 Moreover, the following properties hold:
	\begin{itemize}
		\item[$\mathrm{(i)}$] Problem \eqref{44} possesses a maximal mild solution $$u\in C([0,T_{\max}),L^s(\mathbb{R}^{n}))\cap L^\infty((0,T_{\max}),L^\infty(\mathbb{R}^{n}))$$ with $u(0)=u_0$, where $T_{\max}=T_{\max}(u_0)\leq\infty$. Furthermore, either $T_{\max}=\infty$, or else $T_{\max}<\infty$ and
		\begin{equation}\label{limblowup}
			\lim_{t\rightarrow T_{\max}^-}(\|u(t)\|_{L^\infty(\mathbb{R}^{n})}+\|u(t)\|_{L^s(\mathbb{R}^{n})})=+\infty.
		\end{equation}
		\item[$\mathrm{(ii)}$] If $u_0\geq0$, then the mild solution $u\geq0$ remains nonnegative for all $t\in[0,T]$.
	\end{itemize}
\end{theorem}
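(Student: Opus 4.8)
The plan is to solve the integral equation \eqref{branda} by a Banach fixed point argument. Introduce the space $\mathcal{E}_T:=C([0,T],L^s(\mathbb{R}^n))\cap L^\infty((0,T),L^\infty(\mathbb{R}^n))$ with norm $\|u\|_{\mathcal{E}_T}:=\sup_{0\le t\le T}\|u(t)\|_{L^s}+\sup_{0<t<T}\|u(t)\|_{L^\infty}$, and abbreviate $\|f\|_{L^s\cap L^\infty}:=\|f\|_{L^s}+\|f\|_{L^\infty}$. Fixing $M:=2\,\|u_0\|_{L^s\cap L^\infty}$ and a small $T>0$ to be chosen, I would show that $\Phi(u)(t):=S_\beta(t)u_0+\int_0^t S_\beta(t-\tau)\,I_\alpha(|u|^p)(\tau)\,\mathrm{d}\tau$ maps the closed ball $\mathcal{B}_{T,M}:=\{u\in\mathcal{E}_T:\|u\|_{\mathcal{E}_T}\le M\}$ into itself and is a strict contraction there. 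The two analytic tools are: (a) the standard $L^r$--$L^q$ estimates for the semigroup — in particular the contractions $\|S_\beta(t)f\|_{L^r}\le\|f\|_{L^r}$ for $1\le r\le\infty$ — coming from $S_\beta(t)$ being convolution against a nonnegative kernel of unit $L^1$ mass; and (b) the Hardy--Littlewood--Sobolev inequality $\|I_\alpha g\|_{L^q}\le C\|g\|_{L^r}$ for $1<r<q<\infty$, $\frac1q=\frac1r-\frac\alpha n$, which applied to $g=|u|^p$ reads $\|I_\alpha(|u|^p)\|_{L^q}\le C\|u\|_{L^{pr}}^p$, together with the elementary fact that $I_\alpha$ maps $L^a\cap L^\infty$ into $L^\infty$ for every $a\in[1,n/\alpha)$ (split the Riesz kernel at $|x-y|=1$).

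The crux — and the only place where the hypotheses on $s$ and $p$ are used — is the choice of exponents. For the $L^s$ component, define $r_0$ by $\frac1{r_0}=\frac1s+\frac\alpha n$; then $r_0>1$ \emph{precisely because} $s>\tfrac n{n-\alpha}$, one has $r_0<s$ automatically, and $pr_0\ge s$ \emph{precisely because} $s\le\tfrac{n(p-1)}\alpha$, so HLS and the interpolation $\|u(\tau)\|_{L^{pr_0}}\le\|u(\tau)\|_{L^s}^{s/(pr_0)}\|u(\tau)\|_{L^\infty}^{1-s/(pr_0)}$ give $\|I_\alpha(|u|^p)(\tau)\|_{L^s}\le C\|u\|_{\mathcal{E}_T}^p$. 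For the $L^\infty$ component, choose $a$ with $\max\{1,s/p\}\le a<n/\alpha$ — a nonempty interval since $s<\tfrac{n(p-1)}\alpha<\tfrac{np}\alpha$ — so that $u(\tau)\in L^{pa}\cap L^\infty$ and hence $\|I_\alpha(|u|^p)(\tau)\|_{L^\infty}\le C(\|u(\tau)\|_{L^{pa}}^p+\|u(\tau)\|_{L^\infty}^p)\le C\|u\|_{\mathcal{E}_T}^p$. Using $\|S_\beta(t-\tau)f\|_{L^r}\le\|f\|_{L^r}$ for $r\in\{s,\infty\}$, the Duhamel term then satisfies $\sup_{0<t<T}\bigl\|\int_0^t S_\beta(t-\tau)I_\alpha(|u|^p)(\tau)\,\mathrm{d}\tau\bigr\|_{L^s\cap L^\infty}\le C\,T\,\|u\|_{\mathcal{E}_T}^p$. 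The elementary inequality $\bigl|\,|a|^p-|b|^p\,\bigr|\le p\,(|a|^{p-1}+|b|^{p-1})|a-b|$ together with Hölder (exponents $\tfrac{pr_0}{p-1},\,pr_0$, resp. $\tfrac{pa}{p-1},\,pa$) gives the corresponding Lipschitz bound $\|I_\alpha(|u|^p)(\tau)-I_\alpha(|v|^p)(\tau)\|_{L^s\cap L^\infty}\le C\,M^{p-1}\|u-v\|_{\mathcal{E}_T}$ on $\mathcal{B}_{T,M}$. (The lower bound $p>\tfrac n{n-\alpha}$ serves only to make the admissible range $\tfrac n{n-\alpha}<s<\tfrac{n(p-1)}\alpha$ nonempty.)

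Since $\|S_\beta(\cdot)u_0\|_{\mathcal{E}_T}\le\|u_0\|_{L^s\cap L^\infty}=M/2$, these estimates yield $\|\Phi(u)\|_{\mathcal{E}_T}\le M/2+C\,T\,M^p$ and $\|\Phi(u)-\Phi(v)\|_{\mathcal{E}_T}\le C\,T\,M^{p-1}\|u-v\|_{\mathcal{E}_T}$, so choosing $T=T(M)>0$ with $C\,T\,M^{p-1}\le\tfrac12$ makes $\Phi$ a $\tfrac12$-contraction of $\mathcal{B}_{T,M}$ into itself; its unique fixed point is the desired mild solution. That $\Phi(u)\in C([0,T],L^s)$ follows from the strong continuity of $S_\beta$ on $L^s$ (for $S_\beta(\cdot)u_0$ and for the Duhamel term, whose integrand is continuous into $L^s$ by the Lipschitz bound above), with $\Phi(u)(0)=u_0$; uniqueness in all of $\mathcal{E}_T$ follows from the same Lipschitz estimate on a short interval plus a continuation argument. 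The $L^\infty$ membership is handled by running the iteration in $C([0,T],L^s)$ and passing the uniform bound $\sup_t\|\cdot\|_{L^\infty}\le M$ of the iterates to the limit, which sidesteps Bochner integration in the non-separable space $L^\infty$.

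For $\mathrm{(i)}$, since $T(M)$ depends on $u_0$ only through $\|u_0\|_{L^s\cap L^\infty}$, a solution on $[0,t_0]$ can always be extended by a further time $\delta=\delta\bigl(\|u(t_0)\|_{L^s\cap L^\infty}\bigr)>0$; letting $T_{\max}$ be the supremum of existence times gives the dichotomy, and if $T_{\max}<\infty$ while $\|u(t_k)\|_{L^s\cap L^\infty}$ stayed bounded along some sequence $t_k\uparrow T_{\max}$, the extension step — whose length is then bounded below uniformly in $k$ — would continue the solution past $T_{\max}$, a contradiction; hence no such sequence exists, which is exactly \eqref{limblowup}. For $\mathrm{(ii)}$, if $u_0\ge0$ then $S_\beta(t)u_0\ge0$ (nonnegativity of the kernel of $S_\beta$), $I_\alpha(|u|^p)\ge0$, and $S_\beta(t-\tau)$ preserves nonnegativity, so the right-hand side of \eqref{branda} is $\ge0$, i.e.\ $u\ge0$ on $[0,T]$. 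I expect the exponent bookkeeping of the second paragraph to be the only genuine obstacle; the remainder is the standard parabolic Picard iteration.
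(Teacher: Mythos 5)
Your proposal is correct, and its skeleton (Banach fixed point in a ball of an $L^s\cap L^\infty$-based space, Hardy--Littlewood--Sobolev plus interpolation between $L^s$ and $L^\infty$ for the nonlinearity, uniform-time extension for the blow-up alternative) coincides with the paper's. The genuine difference is in the $L^\infty$ component of the nonlinear estimate: the paper keeps HLS there too, estimating $\|S_\beta(t-\tau)I_\alpha(|u|^p)(\tau)\|_{L^\infty}\le C(t-\tau)^{-\frac{n}{\beta r}}\|I_\alpha(|u|^p)(\tau)\|_{L^r}$ and then choosing $\tilde q$ with $1/\tilde q<\min\{p/s,(\beta+\alpha)/n\}$ so that the time singularity is integrable, which produces the factor $T^{1-\frac{n}{\beta r}}$; you instead bound $\|I_\alpha(|u|^p)\|_{L^\infty}$ directly by splitting the Riesz kernel at $|x-y|=1$ (so $I_\alpha:L^a\cap L^\infty\to L^\infty$ for $a<n/\alpha$) and use only the $L^\infty$-contraction of $S_\beta$, yielding the cleaner factor $T$ and dispensing with the extra exponent constraint involving $(\beta+\alpha)/n$; your exponent bookkeeping ($r_0>1$ from $s>n/(n-\alpha)$, $pr_0\ge s$ from $s\le n(p-1)/\alpha$, and $\max\{1,s/p\}\le a<n/\alpha$ nonempty) checks out. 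Two further minor departures, both valid: you obtain nonnegativity in (ii) directly from the Duhamel formula (both terms on the right of \eqref{branda} are nonnegative once $u_0\ge 0$, since the integrand involves $|u|^p$), whereas the paper runs the fixed point in the positive cone and invokes uniqueness; and you build $C([0,T],L^s)$ into the iteration space rather than recovering continuity a posteriori via the Cazenave--Haraux lemma --- if you take that route, you should spell out the dominated-convergence/strong-continuity argument for continuity of the Duhamel term slightly more carefully than the one-line remark you give, but this is routine.
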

\begin{remark}
As shown in Theorem \ref{localexistence}, the local existence of a solution to problem (1.1) requires that $p>n/(n-\alpha)$. \textcolor{red}{This restriction arises from the nonlocal nature of the nonlinear source term and, in particular, from the use of Lemma \ref{Hardy} combined with H\"older's inequality, and seems to be of a technical nature.} Consequently, in the case $1<p\leq n/(n-\alpha),$ whether the local solution exists or not remains an open question.
\end{remark}
It is straightforward to verify that if \( u(t, x) \) is a solution of equation \eqref{44} with initial data \( u_0 \), then for all \( \lambda > 0 \), the rescaled function
\[
u_\lambda(t,x)=\lambda^{\frac{\beta+\alpha}{p - 1}} u(\lambda^\beta t, \lambda x)
\]
is also a solution, with initial value \( u_\lambda(0,x)=\lambda^{\frac{\beta+\alpha}{p - 1}} u_0(\lambda x) \). Moreover, the $L^q$-norm of the rescaled initial data satisfies
\[
\left\|u_\lambda (0) \right\|_{L^q} = \lambda^{\frac{\beta+\alpha}{p - 1} - \frac{n}{q}} \|u_0\|_{L^q}.
\]
This observation shows that the scaling-invariant Lebesgue exponent for equation \eqref{44}  is
 \begin{equation}\label{qscaling}
q_{\mathrm{sc}} = \frac{n(p - 1)}{\beta+\alpha}. 
\end{equation}
One might therefore expect that if \( q_{\mathrm{sc}} > 1 \), that is, \( p > p_{\mathrm{sc}} \), where the critical exponent is given by
 \begin{equation}\label{pscaling}
p_{\mathrm{sc}} = 1 + \frac{\beta+\alpha}{n},
\end{equation}
and if \( \|u_0\|_{L^{q_{\mathrm{sc}}}} \) is sufficiently small, then the corresponding solution should exist globally in time. However, the next result shows that this expectation does not hold.
\begin{theorem}\label{global} Let $n\geq1$, $\alpha\in(0,n)$, $0<\beta\leq 2$, and $p>1$. 
\begin{itemize}
\item[(i)] If $u_0\in L^1(\mathbb{R}^n)\cap L^\infty(\mathbb{R}^n)$, $u_0\geq 0$ such that $\displaystyle \int_{\mathbb{R}^n}u_0(x)\,dx>0$, and 
 \begin{equation}\label{esti1}
 \frac{n}{n-\alpha}<p\leq p_{\mathrm{Fuj}}(n,\beta,\alpha):=1+\frac{\beta+\alpha}{n-\alpha},
\end{equation}
then the mild solution of problem \eqref{44}  blows-up in finite
 time.
\item[(ii)] If
 \begin{equation}\label{esti2}
    p> p_{\mathrm{Fuj}}(n,\beta,\alpha),
\end{equation}
and $u_0\in L^{q_{\mathrm{sc}}}(\mathbb{R}^n)\cap L^{\infty}(\mathbb{R}^n)$ (where $q_{\mathrm{sc}}$ is given by \eqref{qscaling}), then the mild solution $u\in L^\infty((0,\infty),L^\infty(\mathbb{R}^n))$ exists
globally in time, provided that $\|u_0\|_{L^{q_{\mathrm{sc}}}}$ is sufficiently small.\end{itemize}
\end{theorem}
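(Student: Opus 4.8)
I treat the two assertions separately.

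\emph{Part (i): blow-up for $n/(n-\alpha)<p\le p_{\mathrm{Fuj}}$.} The plan is a nonlinear capacity (test-function) argument. First I would reduce to the non-existence of a \emph{global} weak solution: a mild solution that is global is in particular a global distributional solution of \eqref{44}, so ruling this out forces $T_{\max}<\infty$ for the maximal solution of Theorem~\ref{localexistence}, and then \eqref{limblowup} is exactly the claimed finite-time blow-up. So assume $u\ge0$ is a global weak solution. Testing \eqref{44} against a nonnegative $\varphi\in C_c^\infty(\mathbb{R}^n\times[0,\infty))$ (derivatives and $(-\Delta)^{\beta/2}$ falling on $\varphi$) and moving the Riesz potential onto $\varphi$ by self-adjointness, $\int I_\alpha(u^p)\varphi=\int u^p I_\alpha\varphi$, gives
\[
\int_0^\infty\int_{\mathbb{R}^n}u^p\,I_\alpha\varphi+\int_{\mathbb{R}^n}u_0\,\varphi(\cdot,0)=\int_0^\infty\int_{\mathbb{R}^n}u\bigl(-\partial_t\varphi+(-\Delta)^{\beta/2}\varphi\bigr);
\]
applying Young's inequality with the weight $I_\alpha\varphi$ to the right-hand side and absorbing,
\[
\int_0^\infty\int_{\mathbb{R}^n}u^p\,I_\alpha\varphi+\int_{\mathbb{R}^n}u_0\,\varphi(\cdot,0)\ \lesssim\ \int_0^\infty\int_{\mathbb{R}^n}(I_\alpha\varphi)^{-\frac1{p-1}}\bigl|\partial_t\varphi-(-\Delta)^{\beta/2}\varphi\bigr|^{\frac{p}{p-1}}=:\mathcal I(\varphi).
\]

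The step ``adapted to the structure of the problem'' is the choice $\varphi=\varphi_R=\bigl(\phi(x/R)\eta(t/R^\beta)\bigr)^{\ell}$ with $\phi,\eta$ standard cut-offs equal to $1$ near the origin and $\ell$ large. Two facts drive the argument: (a) the Riesz potential of a bump of width $R$ has size $R^\alpha$ on its core, so $I_\alpha\varphi_R\gtrsim R^\alpha$ on $B_R\times[0,R^\beta]$; (b) by the usual scaling of $\partial_t$ and of $(-\Delta)^{\beta/2}$ acting on cut-offs (including the $|x|^{-(n+\beta)}$ tail of the latter when $\beta<2$), together with $I_\alpha\varphi_R\lesssim R^\alpha$, one gets $\mathcal I(\varphi_R)\lesssim R^{\,n-\frac{\beta+\alpha}{p-1}}$. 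Inserting (a), (b),
\[
R^{\alpha}\int_0^{R^\beta}\int_{B_R}u^p+\int_{\mathbb{R}^n}u_0\,\varphi_R(\cdot,0)\ \lesssim\ R^{\,n-\frac{\beta+\alpha}{p-1}} .
\]
The surplus factor $R^{\alpha}$ in front of $\int u^p$ — produced for free by $I_\alpha$ — is precisely what replaces the scaling dimension $n$ by $n-\alpha$, hence turns $p_{\mathrm{sc}}=1+(\beta+\alpha)/n$ of \eqref{pscaling} into $p_{\mathrm{Fuj}}=1+(\beta+\alpha)/(n-\alpha)$. Since $u_0\in L^1$, $u_0\ge0$ and $\int u_0>0$, dominated convergence gives $\int u_0\,\varphi_R(\cdot,0)\to\int u_0>0$. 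If $p<p_{\mathrm{Fuj}}$, equivalently $n-\alpha<(\beta+\alpha)/(p-1)$, the right-hand side is $o(R^\alpha)$, so $\int_0^{R^\beta}\int_{B_R}u^p\to0$ as $R\to\infty$; thus $u\equiv0$, contradicting $u(0)=u_0\not\equiv0$.

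The critical case $p=p_{\mathrm{Fuj}}$ is the main obstacle. There $\mathcal I(\varphi_R)\simeq R^\alpha$ and the gain from $I_\alpha\varphi_R\gtrsim R^\alpha$ is exactly balanced, so the inequality only yields $u^p\in L^1(\mathbb{R}^n\times(0,\infty))$. To close it I would feed this integrability back via Hölder in place of Young, re-run the estimate over a dyadic (or logarithmically graded) family of test functions, and exploit that at the critical exponent the far field of $u$ is controlled uniformly in time by $\int\!\int u^p$, so that $u(t,x)\lesssim|x|^{-(n-\alpha)}$ — the decay needed to make the residual cross-terms vanish. I would organize the write-up so that the sub-critical case and the case $\beta=2$ (where $(-\Delta)^{\beta/2}\varphi_R=-\Delta\varphi_R$ is supported in an annulus escaping to infinity) come first, and treat the critical fractional case as the technical heart.

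\emph{Part (ii): global existence for $p>p_{\mathrm{Fuj}}$.} I would run a contraction-mapping argument for the Duhamel map $\Phi(u)(t)=S_\beta(t)u_0+\int_0^t S_\beta(t-s)I_\alpha(|u(s)|^p)\,ds$ in a space modelled on the scaling \eqref{qscaling}, e.g.
\[
E=\Bigl\{u:\ \sup_{t>0}\|u(t)\|_{L^{q_{\mathrm{sc}}}}+\sup_{q\in[q_{\mathrm{sc}},r_1]}\ \sup_{t>0}\ t^{\frac n\beta(\frac1{q_{\mathrm{sc}}}-\frac1q)}\|u(t)\|_{L^{q}}<\infty\Bigr\},
\]
for a suitable $r_1\le\infty$. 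The ingredients are the smoothing estimate $\|S_\beta(\tau)f\|_{L^r}\lesssim\tau^{-\frac n\beta(\frac1q-\frac1r)}\|f\|_{L^q}$ with contractivity of $S_\beta$ on $L^{q_{\mathrm{sc}}}$, and the Hardy--Littlewood--Sobolev inequality $\|I_\alpha g\|_{L^m}\lesssim\|g\|_{L^k}$, $\frac1m=\frac1k-\frac\alpha n$, applied to $g=|u|^p$ so that $\|I_\alpha(|u(s)|^p)\|_{L^m}\lesssim\|u(s)\|_{L^{pk}}^p$. Substituting into $\Phi$ one is led to time integrals $\int_0^t(t-s)^{-a}s^{-b}\,ds$; by the definition of $q_{\mathrm{sc}}$ alone the resulting power $1-a-b$ equals the required $-\frac n\beta(\frac1{q_{\mathrm{sc}}}-\frac1r)$, and the only genuine constraints are $a<1$, $b<1$ for convergence. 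The condition $b<1$ (for an admissible choice of exponents) amounts to $n(p-1)>p\alpha+\beta$, i.e. to $p>(n+\beta)/(n-\alpha)=p_{\mathrm{Fuj}}$, precisely \eqref{esti2}: this is the reason the small-data threshold is $p_{\mathrm{Fuj}}$ and not $p_{\mathrm{sc}}$ — the obstruction is the integrability near $s=0$ of the Duhamel term, which the slow spatial decay of $I_\alpha(|u|^p)$ renders strictly more restrictive than scaling predicts. With $\|u_0\|_{L^{q_{\mathrm{sc}}}}$ small and using $u_0\in L^\infty$ (hence $u_0\in L^q$ for all $q\in[q_{\mathrm{sc}},\infty]$) to handle $t$ near $0$, $\Phi$ maps a small ball of $E$ into itself and is a contraction; the fixed point is the global mild solution, and finitely many further applications of the same smoothing/HLS estimates (a bootstrap in the integrability exponent) upgrade it to $u\in L^\infty((0,\infty),L^\infty(\mathbb{R}^n))$. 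The only subtlety is that for $p_{\mathrm{Fuj}}<p\le2+\alpha/\beta$ one cannot reach $L^\infty$ in one step and must iterate; otherwise this part is routine.
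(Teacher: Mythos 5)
Your Part (ii) is essentially the paper's argument: a contraction for the Duhamel map in a scaling-weighted space $\sup_t t^{\beta^*}\|u(t)\|_{L^q}$ (the paper uses a single auxiliary exponent $q$ with $\frac{\beta+\alpha}{\beta(p-1)}-\frac1p<\frac{n}{\beta q}<\frac{\beta+\alpha}{\beta(p-1)}$, $q>p$, rather than a whole range $[q_{\mathrm{sc}},r_1]$), the HLS inequality applied to $|u|^p$, the threshold $p>p_{\mathrm{Fuj}}$ appearing exactly as your condition $p\beta^*<1$ for convergence of the time integral near $s=0$, and a final bootstrap (matching with the local solution of Theorem \ref{localexistence} near $t=0$, then iterating the smoothing/HLS estimate) to reach $L^\infty((0,\infty),L^\infty)$. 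For Part (i), your subcritical argument is a correct variant of the paper's: you transfer $I_\alpha$ onto the test function by Tonelli and use $I_\alpha\varphi_R\gtrsim R^\alpha$ on the core, whereas the paper keeps $I_\alpha(u^p)$ on the left and bounds the kernel below via $|x-\xi|^{n-\alpha}\le 2^{n-\alpha}R^{n-\alpha}$ for $x\in B_{R/2}$, $\xi\in B_R$; both produce the same extra factor $R^\alpha$ that shifts $p_{\mathrm{sc}}$ to $p_{\mathrm{Fuj}}$, and your capacity bound $\mathcal I(\varphi_R)\lesssim R^{n-\frac{\beta+\alpha}{p-1}}$ (including the fractional tail) is of the right order.

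The genuine gap is the critical case $p=p_{\mathrm{Fuj}}(n,\beta,\alpha)$, which the theorem asserts and which you only gesture at. Your plan relies on the claim that at criticality ``the far field of $u$ is controlled uniformly in time \ldots so that $u(t,x)\lesssim|x|^{-(n-\alpha)}$''; this pointwise decay is neither proved nor provable from the information you have (all you get from the critical capacity estimate is $u\in L^p((0,\infty)\times\mathbb{R}^n)$), and it is not needed. The paper closes the critical case with a concrete two-parameter argument: take $R=(KT)^{1/\beta}$ with $K\ge1$ fixed, so the capacity inequality becomes $\textbf{I}\le CK^{-\frac{\beta+\alpha}{\beta+n}}\textbf{I}^{1/p}+CK^{\frac{n-\alpha}{\beta+n}}\textbf{J}^{1/p}$, where $\textbf{J}$ is the piece of $\int u^p\psi$ localized in $t\in[T/2,T]$ (where $\partial_t\varphi_T$ lives); the subcritical-type bound, evaluated at criticality, already gives $u^p\in L^1((0,\infty)\times\mathbb{R}^n)$, hence $\textbf{J}\to0$ as $T\to\infty$ as the tail of a convergent integral. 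Absorbing $\textbf{I}^{1/p}$ by $\varepsilon$-Young, letting $T\to\infty$ at fixed $K$, and then taking $K$ large yields $\int_0^\infty\int_{\mathbb{R}^n}u^p\lesssim K^{-1}\to0$, whence $u\equiv0$ and the contradiction with $\int u_0>0$. Without this (or an equivalent mechanism exploiting that only the annular/terminal regions carry the derivative terms), your proof covers only $n/(n-\alpha)<p<p_{\mathrm{Fuj}}$, not the full range \eqref{esti1}.
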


\begin{remark} ${}$
\begin{itemize}
\item [$(\mathrm{a})$] Based on \eqref{esti1} and \eqref{esti2}, we identify $p_{\mathrm{Fuj}}(n,\beta,\alpha)$ as the critical exponent for problem \eqref{44}. However, since $p_{\mathrm{Fuj}}(n,\beta,\alpha)>p_{\mathrm{sc}}$, where $p_{\mathrm{sc}}$ is the scaling exponent defined in \eqref{pscaling}, it follows that the critical exponent $p_{\mathrm{Fuj}}(n,\beta,\alpha)$ is {\bf not governed by scaling arguments}. Cazenave et al. \cite{CDW} reported a similar phenomenon while examining the Fujita-type critical exponent for the semilinear heat equation involving  nonlocal nonlinearity of the form \eqref{C2}. 
\item[$(\mathrm{b})$] Part (ii) of Theorem \ref{global} provides a {\bf positive answer to the conjecture} of Mitidieri and Pohozaev posed in \cite{PM2}, which states that if $p>1+(2+\alpha)/(n-\alpha)$ and $u_0$ is sufficiently small, then problem \eqref{44} (when $\beta=2$) admits a global solution. In the case of the fractional Laplacian with $\beta\in(0,2),$ Theorem \ref{global} yields significant new results, identifying a new critical Fujita exponent given by $$p_{\mathrm{Fuj}}(n,\beta,\alpha)=1+\frac{\beta+\alpha}{n-\alpha}.$$ From this expression, it is evident that as $\alpha \to 0$ and $\beta = 2$, the quantity $p_{\mathrm{Fuj}}(n,2,0)$ reduces to the classical Fujita critical exponent $p_{\mathrm{Fuj}}(n)=1+2/n$. Moreover, as $\alpha \to n$, the critical exponent tends to $\infty$.
\item[$(\mathrm{c})$] Observe that one may assume the decay condition $|u_0(x)|\leq C (1+|x|)^{-\gamma}$ with $\gamma>(\beta+\alpha)/(p-1)$ instead of the integrability condition $u_0\in L^{q_{\mathrm{sc}}}(\mathbb{R}^n)$. This assumption is consistent with that made by Mitidieri and Pohozaev \cite{PM2}, who hypothesized that for such $u_0$, the solution exists globally. Moreover, it highlights its relevance to Theorem  \ref{blowup} (ii) below.
\item[$(\mathrm{d})$] When $\beta=2$, the condition $u_0\geq0$ is not required; see, for example, \cite{Fino5}. For $\beta\in(0,2)$, the test function recently used in \cite{DaoFino} is also recommended; however, due to \eqref{triangle inequality}, we refrain from using it.
\textcolor{red}{\item[$(\mathrm{e})$] To better understand the originality of the critical exponent $p_{\mathrm{Fuj}}(n,\beta,\alpha)$, let us note that the steady-state solution $v$ of problem \eqref{44} can formally satisfy
$$(-\Delta)^{\frac{\alpha+\beta}{2}}v=|v|^p$$
where we have used the fact that the Riesz potential $I_\alpha$ can be regarded as the inverse of the fractional Laplacian $(-\Delta)^{\alpha/2}$. By setting $\varphi(x)=|x|^{-(n-\alpha)}$ for $x\in\mathbb{R}^n\setminus\{0\}$ and applying the estimate of Bonforte and V\'azquez \cite{BonforteVazquez}:
$$(-\Delta)^{\frac{\alpha+\beta}{2}}\varphi\leq C|x|^{-(n-\beta)},$$
one can see that $\varphi$ is a steady-state subsolution of problem \eqref{44} if 
$$(n-\alpha)p = n + \beta\qquad \hbox{that is}\qquad p=p_{\mathrm{Fuj}}(n,\beta,\alpha).$$}
\end{itemize}
\end{remark} 

In the following, we consider a more general version of problem \eqref{44}, in which the Riesz potential is replaced by a general convolution with kernel $\mathcal{K}.$

Let
\begin{equation}\label{1}
\left\{\begin{array}{ll}
u_{t}+(-\Delta)^{\frac{\beta}{2}} u= (\mathcal{K}\ast |u|^{p}),&\qquad x\in \mathbb{R}^n,\,\,\,t>0,\\\\
 u(x,0)=u_{0}(x),& \qquad x\in \mathbb{R}^n,\\
 \end{array}
 \right.
\end{equation}
where $\beta\in(0,2]$, $n\geq1$, $p>1$, $u_0\in L^1_{\hbox{\tiny{loc}}}(\mathbb{R}^n)$. The function $\mathcal{K}:(0,\infty)\rightarrow(0,\infty)$ is continuous, it satisfies $\mathcal{K}(|\cdotp|)\in L^1_{\hbox{\tiny{loc}}}(\mathbb{R}^n)$, and there exists $R_0>1$ such that $$\inf\limits_{r\in(0,R)}\mathcal{K}(r)=\mathcal{K}(R),\qquad \text{for all} \,\,\,\,R>R_0.$$
\textcolor{red}{Typical examples of $\mathcal{K}$ are the constant functions as well as
$$\mathcal{K}(r)=r^{-(n-\alpha)},\,\,\alpha\in(0,n)\quad\hbox{or}\quad \mathcal{K}(r)=r^{-(n-\alpha)}\log^\beta(1+r),\,\,\alpha\in(0,n),\,\beta\in\mathbb{R},\,\beta+\alpha>0.$$ 
}The nonlinear convolution term $\mathcal{K}\ast|u|^p$ is the Fourier convolution between $\mathcal{K}$ and $|u|^p$ defined by
\begin{align*}(\mathcal{K}\ast |u|^p)(x)&=\int_{\mathbb{R}^n}\mathcal{K}(|x-y|)|u(y)|^p\,dy\\&=\int_{\mathbb{R}^n}\mathcal{K}(|y|)|u(x-y)|^p\,dy.\end{align*}
First, we give the definition of a weak solution to problem \eqref{1}.
\begin{definition}\textup{(Weak solution of \eqref{1})}${}$\\
Let $u_0\in L^1_{\hbox{\tiny{loc}}}(\mathbb{R}^n)$ and $T>0$. We say that $u\in L_{\hbox{\tiny{loc}}}^1((0,T)\times\mathbb{R}^n)$ is a weak solution of \eqref{1} on $[0,T)\times\mathbb{R}^n$ if
$$ (\mathcal{K}\ast |u|^p) \in L_{\hbox{\tiny{loc}}}^1((0,T)\times\mathbb{R}^n),$$
and
\begin{equation}\label{weaksolution2}\begin{split}
\int_0^\tau\int_{\mathbb{R}^n}(\mathcal{K}\ast|u|^p)\psi(t,x)\,dx\,dt+\int_{\mathbb{R}^n}u(0,x)\psi(0,x)\,dx&=\int_0^\tau\int_{\mathbb{R}^n}u\,(-\Delta)^{\frac{\beta}{2}}\psi(t,x)\,dx\,dt\nonumber\\&-\int_0^\tau\int_{\mathbb{R}^n}u\,\psi_t(t,x)\,dx\,dt,
\end{split}\end{equation}
holds for all compactly supported test function $\psi\in C^{1,2}_{t,x}([0,T)\times\mathbb{R}^n)$, and $0\leq\tau<T$.\\ If $T=\infty$,  $u$ is called a global in time weak solution to \eqref{1}.
\end{definition}
\begin{theorem}\label{theo11}
Let $n\geq1$, $\beta\in(0,2]$, and  $p>1$.
\begin{itemize}
\item[(i)] If $u_0\in L^1(\mathbb{R}^n)$,
$$\int_{\mathbb{R}^n}u_0(x)\,dx>0\qquad\hbox{and}\qquad \limsup_{R\rightarrow\infty}\left(\mathcal{K}(R)\,R^{\frac{n+\beta}{p}}\right)>0,$$
then problem \eqref{1} has no global weak nonnegative solutions.
\item[(ii)] If $u_0\in L^1_{loc}(\mathbb{R}^n)$,
$$\liminf_{R\rightarrow\infty}R^{-n}(\mathcal{K}(R))^{-1}\int_{B_R}u_0(x)\,dx\geq0\qquad\hbox{and}\qquad \limsup_{R\rightarrow\infty}\left(\mathcal{K}(R)\,R^{\frac{n+\beta}{p}}\right)>0,$$
then problem \eqref{1} has no global weak nontrivial nonnegative solutions.
\item[(iii)] If $ u_0\in L^1_{\hbox{\tiny{loc}}}(\mathbb{R}^n)$,
$$u_0(x)\geq\varepsilon(1+|x|^2)^{-\gamma/2}\,\,\,\,\hbox{and}\,\,\,\,\,
\liminf_{R\rightarrow\infty}\left(\mathcal{K}(R)^{-1}R^{\gamma(p-1)-n-\beta}\right)=0,$$
for some positive constant $\varepsilon>0$ and any exponent $\gamma>0$, then problem \eqref{1} has no global weak nonnegative solutions.\\
\end{itemize}
\end{theorem}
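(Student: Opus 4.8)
The natural approach is the nonlinear capacity (rescaled test function) method, which is the standard tool for Mitidieri–Pohozaev-type nonexistence results and is well suited to the weak formulation \eqref{weaksolution2}. The plan is to argue by contradiction: assume a global weak nonnegative solution $u$ exists, and insert into \eqref{weaksolution2} a carefully chosen family of test functions $\psi_R(t,x)=\varphi_R(t)\Phi_R(x)$ (or a single function of $t^{1/\beta}+|x|$ scaled by $R$), where $\varphi_R$ cuts off in time on a scale $R^\beta$ and $\Phi_R$ cuts off in space on scale $R$, with $\Phi_R = \Phi(x/R)^\ell$ for a large power $\ell$. The key structural point, exactly as in the Riesz-potential case, is that the convolution term must be handled \emph{not} by moving it onto the test function, but by exploiting positivity of $\mathcal{K}$ together with the monotonicity hypothesis $\inf_{r\in(0,R)}\mathcal{K}(r)=\mathcal{K}(R)$: on the support region one bounds $(\mathcal{K}\ast|u|^p)(x)\geq \mathcal{K}(cR)\int_{|y|\leq cR}|u(x-y)|^p\,dy$ and, after integrating against $\Phi_R$, one recovers a lower bound of the form $\mathcal{K}(cR)\,R^{?}\bigl(\text{something like }\int\int |u|^p\Phi_R\bigr)$ — this is where the extra factor $R^{n}$ from the convolution integration (versus $R^{-(n-\alpha)}$ decay of the Riesz kernel) enters, and is the reason the critical exponent is shifted off scaling.

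Concretely, the steps are: (1) write down the weak identity with $\psi=\psi_R$, so that $\int_0^\infty\!\int (\mathcal{K}\ast|u|^p)\psi_R + \int u_0\psi_R(0,\cdot) = \int_0^\infty\!\int u\,[(-\Delta)^{\beta/2}\psi_R - \partial_t\psi_R]$; (2) estimate the right-hand side using the scaling of the cutoff — $\|\partial_t\psi_R\|\sim R^{-\beta}$, and for the nonlocal term use a fractional-Leibniz / kernel estimate to get $|(-\Delta)^{\beta/2}\psi_R|\lesssim R^{-\beta}\Phi_R^{1-1/\ell}$ on its support (this requires $\beta\in(0,2)$ care and is where the power $\ell$ and $\Phi\in C^\infty_c$ are used) — then apply $\varepsilon$-Young with exponents $p$ and $p'$ to absorb $\int\int |u|^p\psi_R$ into the left side; (3) combine with the convolution lower bound from the previous paragraph to obtain, for all large $R$, an inequality of the schematic form
$$\mathcal{K}(cR)\,R^{n}\!\!\int_{Q_R}\!\!|u|^p\psi_R \;+\; \int u_0\psi_R(0,\cdot) \;\lesssim\; R^{-\beta p'}\,R^{n+\beta}\Bigl(\int_{Q_R}|u|^p\psi_R\Bigr)^{1/p}\!\!\!,$$
where $Q_R$ is the space-time parabolic box; (4) for part (i) drop the (nonnegative) volume term and the $u_0$ term is bounded below by a positive constant since $\int u_0>0$, then solve the resulting inequality for $I_R:=\int_{Q_R}|u|^p\psi_R$ to conclude $I_R\lesssim \bigl(\mathcal{K}(cR)R^{n}\bigr)^{-p'}R^{(n+\beta)p'}\cdot R^{-\beta p' p'}$ or similar — the precise bookkeeping must show that the hypothesis $\limsup_R \mathcal{K}(R)R^{(n+\beta)/p}>0$ forces $I_R\to 0$, hence $u\equiv 0$, contradicting $\int u_0>0$; (5) for part (ii) keep the volume term, use $\liminf R^{-n}\mathcal{K}(R)^{-1}\int_{B_R}u_0\geq 0$ to control the (possibly negative) initial contribution, and run the same dichotomy to get $u\equiv 0$, again contradicting nontriviality via the weak equation; (6) for part (iii), instead of discarding it, bound the initial term from below using $u_0(x)\geq\varepsilon(1+|x|^2)^{-\gamma/2}$, which gives $\int u_0\psi_R(0,\cdot)\gtrsim \varepsilon R^{n-\gamma}$ when $\gamma<n$ (or $\gtrsim\varepsilon\log R$ / bounded below otherwise), and check that $\liminf_R \mathcal{K}(R)^{-1}R^{\gamma(p-1)-n-\beta}=0$ is exactly the condition making the right-hand side negligible compared to this lower bound along a subsequence, yielding the contradiction.

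The main obstacle I expect is the estimate on the nonlocal term $(-\Delta)^{\beta/2}\psi_R$ for $\beta\in(0,2)$: unlike the local Laplacian, it does not preserve compact support, so one cannot simply bound it by a cutoff times $R^{-\beta}$. The standard fix — and the one the remark about \eqref{triangle inequality} hints the authors adopt — is to choose $\Phi_R(x)=\Phi(x/R)^{\ell}$ with $\ell$ large and prove the pointwise bound $|(-\Delta)^{\beta/2}(\Phi_R^{\ell})(x)|\le C R^{-\beta}\bigl(\Phi_R(x)\bigr)^{\ell-1}$ uniformly, splitting the singular integral into a near region (controlled by the $C^2$ bound on $\Phi$) and a far region (controlled by integrability of $|z|^{-n-\beta}$ and the global bound on $\Phi$), together with a tail estimate showing the contribution of $(-\Delta)^{\beta/2}\psi_R$ outside $\mathrm{supp}\,\Phi_R$ is absorbable because $u\in L^p_{loc}$ and the kernel decays. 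Handling this tail rigorously — ensuring the "remote" part of the fractional Laplacian acting on the test function does not destroy the absorption — is the delicate point; everything else is the now-routine scaling bookkeeping that produces the three stated kernel conditions.
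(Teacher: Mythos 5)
Your overall strategy is the same as the paper's (nonlinear capacity with a rescaled cutoff, and the key lower bound $(\mathcal{K}\ast|u|^p)(x)\geq \mathcal{K}(2R)\int_{|y|\leq R}|u|^p$ on the half-ball, coming from the monotonicity hypothesis on $\mathcal{K}$), but there are two genuine gaps. The main one is your step (4)/(5): after choosing $T\sim R^\beta$ the capacity estimate yields, as in \eqref{141}, only
$\mathbf{I}^{\frac{p-1}{p}}\leq C\big(\mathcal{K}(2R)\,R^{\frac{n+\beta}{p}}\big)^{-1}$,
and the hypothesis $\limsup_{R\to\infty}\mathcal{K}(R)R^{\frac{n+\beta}{p}}>0$ does \emph{not} force $\mathbf{I}\to0$: along the subsequence $R_j$ the limit $\ell$ may be finite and positive (e.g.\ $\mathcal{K}(R)=R^{-\frac{n+\beta}{p}}$, which is exactly the critical case absorbed into parts (i)--(ii)), in which case you only learn $u\in L^p((0,\infty)\times\mathbb{R}^n)$. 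To conclude $u\equiv0$ there the paper runs a second, refined argument: it keeps the time-derivative contribution separately as $\mathbf{J}=\int_{T/2}^T\int|u|^p\psi$, decouples the scales by taking $R=R_j/2$ and $T=(R_j/2)^\beta K^{-1}$ with an auxiliary parameter $K$, uses $u\in L^p$ and dominated convergence to send $\mathbf{J}\to0$ as $j\to\infty$, and only then lets $K\to\infty$. Without this two-parameter step your plan proves the theorem only when the $\limsup$ is $+\infty$.

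The second issue is your treatment of $(-\Delta)^{\beta/2}$ applied to the cutoff for $\beta\in(0,2)$. The uniform pointwise bound $|(-\Delta)^{\beta/2}(\Phi_R^{\ell})(x)|\leq CR^{-\beta}\Phi_R^{\ell-1}(x)$ you propose cannot hold: outside $\mathrm{supp}\,\Phi_R$ the right-hand side vanishes while the left-hand side does not, and the "tail" $\int_{|x|>R}u\,(-\Delta)^{\beta/2}\Phi_R^{\ell}$ is not controllable by $u\in L^1_{loc}$ or $L^p_{loc}$ for a general global weak solution. The paper avoids this entirely by a sign argument rather than a size argument: it uses the one-sided Ju-type inequality $(-\Delta)^{\beta/2}(\varphi_R^{\ell})\leq \ell\,\varphi_R^{\ell-1}(-\Delta)^{\beta/2}\varphi_R$ together with the assumed nonnegativity of $u$ (this is precisely why nonnegativity appears in Theorem \ref{theo11} for $\beta<2$), so the exterior contribution is nonpositive and can be dropped, and the interior term scales like $R^{-\beta}$ by Lemma \ref{lemma4}. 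Your part (iii) outline is essentially correct (the lower bound $\int u_0\varphi_R^{\ell}\gtrsim \varepsilon R^{n-\gamma}$ holds for every $\gamma>0$, no case distinction needed), provided the two points above are repaired.
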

\begin{remark}
 For $\beta=2$, nonnegativity of the solutions in Theorem \ref{theo11} is not necessary; see, e.g., \cite{FinoKiraneBK}.
    \end{remark}
Since problem \eqref{1} reduces to problem \eqref{44} when $\mathcal{K}(r)=A_\alpha r^{-(n-\alpha)}$, Theorem \ref{theo11} immediately yields the following result, using the fact that
$$ \liminf_{R\rightarrow\infty}\left(\mathcal{K}(R)^{-1}R^{\gamma(p-1)-n-\beta}\right)=0\,\,\Longleftrightarrow \,\,p<p_*:=1+\frac{\beta+\alpha}{\gamma}.$$

\begin{theorem}\label{blowup}
Let $n\geq1$, $\alpha\in(0,n)$, $0<\beta\leq 2$, and $p>1$.
\begin{itemize}
\item[(i)] If $u_0\in L^1_{loc}(\mathbb{R}^n)$ such that $$\liminf\limits_{R\rightarrow\infty}R^{-\alpha}\int_{B_{R}}u_0(x)\,dx\geq 0,\,\,\,\, \text{and}\,\,\,\,
 1< p\leq p_{\mathrm{Fuj}}(n,\beta,\alpha),$$
then the \eqref{44} has no global nontrivial nonnegative weak solutions.
\item[(ii)] If $u_0\in L^1_{\hbox{\tiny{loc}}}(\mathbb{R}^n)$,
$$u_0(x)\geq\overline{\varepsilon}(1+|x|^2)^{-\gamma/2}\,\,\,\,\hbox{and}\,\,\,\,\,
p<p_*=1+\frac{\beta+\alpha}{\gamma},$$
for some positive constant $\overline{\varepsilon}>0$ and any exponent $\gamma>0$, then problem \eqref{44} has no global weak solutions.\\
\end{itemize}
\end{theorem}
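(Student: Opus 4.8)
The plan is to derive Theorem~\ref{blowup} as a direct specialization of Theorem~\ref{theo11}. Taking the kernel $\mathcal{K}(r)=A_\alpha r^{-(n-\alpha)}$ turns the convolution $(\mathcal{K}\ast|u|^p)$ into $I_\alpha(|u|^p)$, so that problem~\eqref{1} becomes problem~\eqref{44} and the notion of weak solution of~\eqref{1} reduces to that of~\eqref{44}. First I would check that this kernel is admissible in the sense required before~\eqref{1}: it is positive and continuous on $(0,\infty)$; since $0<n-\alpha<n$, the function $\mathcal{K}(|\cdot|)=A_\alpha|\cdot|^{-(n-\alpha)}$ belongs to $L^1_{\mathrm{loc}}(\mathbb{R}^n)$; and since $r\mapsto r^{-(n-\alpha)}$ is strictly decreasing, $\inf_{r\in(0,R)}\mathcal{K}(r)=\mathcal{K}(R)$ for every $R>0$, so the monotonicity requirement holds with any choice $R_0>1$.

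For part~(i), the plan is to invoke Theorem~\ref{theo11}(ii). With this kernel one has $R^{-n}(\mathcal{K}(R))^{-1}=A_\alpha^{-1}R^{-\alpha}$, so the assumption $\liminf_{R\to\infty}R^{-\alpha}\int_{B_R}u_0\,dx\ge0$ is exactly the assumption $\liminf_{R\to\infty}R^{-n}(\mathcal{K}(R))^{-1}\int_{B_R}u_0\,dx\ge0$ appearing in Theorem~\ref{theo11}(ii). Moreover, $\mathcal{K}(R)\,R^{(n+\beta)/p}=A_\alpha R^{(n+\beta)/p-(n-\alpha)}$, and since $p_{\mathrm{Fuj}}(n,\beta,\alpha)=1+(\beta+\alpha)/(n-\alpha)=(n+\beta)/(n-\alpha)$, the condition $1<p\le p_{\mathrm{Fuj}}(n,\beta,\alpha)$ is equivalent to $(n+\beta)/p\ge n-\alpha$, hence to $\limsup_{R\to\infty}\left(\mathcal{K}(R)\,R^{(n+\beta)/p}\right)\ge A_\alpha>0$. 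Thus both hypotheses of Theorem~\ref{theo11}(ii) are in force, and that theorem yields the claimed nonexistence of global nontrivial nonnegative weak solutions of~\eqref{44}.

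For part~(ii), the plan is to invoke Theorem~\ref{theo11}(iii). The lower bound $\overline{\varepsilon}(1+|x|^2)^{-\gamma/2}$ on $u_0$ is already of the form required there, so only the second hypothesis needs translating: $(\mathcal{K}(R))^{-1}R^{\gamma(p-1)-n-\beta}=A_\alpha^{-1}R^{\gamma(p-1)-\alpha-\beta}$, whose $\liminf$ as $R\to\infty$ equals $0$ precisely when $\gamma(p-1)-\alpha-\beta<0$, that is, when $p<1+(\beta+\alpha)/\gamma=p_*$ (in the borderline case $\gamma(p-1)=\alpha+\beta$ the $\liminf$ equals $A_\alpha^{-1}>0$). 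Hence Theorem~\ref{theo11}(iii) applies and rules out global weak solutions of~\eqref{44}.

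In both parts the argument is a pure reduction: the entire analytic substance already lies in Theorem~\ref{theo11}, whose proof proceeds by the nonlinear capacity method, so I do not anticipate a genuine obstacle. The only points requiring care are the sharp endpoint $p=p_{\mathrm{Fuj}}(n,\beta,\alpha)$ in part~(i) --- which is exactly why the growth condition on the kernel in Theorem~\ref{theo11} must be phrased as $\limsup(\cdots)>0$ rather than as a strict power inequality, so that equality is not excluded --- and transporting the weighted-average hypothesis on $u_0$ with the correct power $R^{-\alpha}$; both are settled by the single identity $R^{-n}(\mathcal{K}(R))^{-1}=A_\alpha^{-1}R^{-\alpha}$.
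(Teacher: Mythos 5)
Your proposal is correct and coincides with the paper's own treatment: the paper obtains Theorem \ref{blowup} precisely as the specialization of Theorem \ref{theo11} to the kernel $\mathcal{K}(r)=A_\alpha r^{-(n-\alpha)}$, noting the equivalence $\liminf_{R\to\infty}\bigl(\mathcal{K}(R)^{-1}R^{\gamma(p-1)-n-\beta}\bigr)=0\Leftrightarrow p<p_*$. Your additional checks (kernel admissibility, $R^{-n}\mathcal{K}(R)^{-1}=A_\alpha^{-1}R^{-\alpha}$, and the $\limsup$ condition covering the endpoint $p=p_{\mathrm{Fuj}}(n,\beta,\alpha)$) simply make explicit what the paper leaves implicit.
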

\begin{remark} Theorems \ref{theo11} and \ref{blowup} give us reason to present the following arguments:
\begin{itemize}
\item[a)] Theorem \ref{theo11} complements the work \cite{PM2} for more general convolution operators instead of the Riesz potential where in \cite{PM2} was studied.
\item[b)] It is worth noting that choosing $\gamma<n-\alpha$ allows Theorem \ref{blowup}-(ii) to extend the blow-up result  from Theorem \ref{blowup}-(i). 
\item[c)] Under the weaker assumption $u_0\in L^1_{\hbox{\tiny{loc}}}(\mathbb{R}^n)$ in Theorem \ref{blowup}-(i)—as opposed to the stronger condition $u_0\in L^1(\mathbb{R}^n)\cap L^\infty(\mathbb{R}^n)$ in Theorem \ref{global}-(i)—the conclusion is correspondingly weakened: the solution fails to exist globally in the weak sense, rather than blowing up in finite time. However, the restriction on $p$ of the form $p>n/(n-\alpha)$ is no longer required.
\item[d)] \textcolor{red}{For the sake of clarity, we emphasize the distinction between the finite-time blow-up of mild solutions and the nonexistence of global weak solutions. Finite-time blow-up of mild solutions means that a solution exists on a maximal time interval $[0,T_{\max})$ with $0<T_{\max}<\infty$, but the norm of the solution, in an appropriate functional space, becomes unbounded as $t\rightarrow T_{\max}$. In contrast, the nonexistence of global weak solutions refers to the fact that no solution, even in a weak sense, can be defined for all $t\geq 0$ under the given assumptions.}
\end{itemize}
\end{remark}
In summary, Theorems \ref{localexistence}, \ref{global}, \ref{theo11}, and \ref{blowup} offer comprehensive answers to {\bf Questions 1–3}. The subsequent sections are devoted to detailed proofs of these results. Proofs of blow-up and nonexistence of solutions are based on the test function method, originally proposed by various authors (see, for example, \cite{Baras-Pierre, Baras, PM1, Zhang1}). In our case, we construct test functions within the corresponding classes that are specifically adapted to the nonlocal nature of the problems under consideration. Global existence is obtained through the fixed-point method (see \cite{CDW,QS}), using the Hardy–Littlewood–Sobolev inequality, the fractional heat kernel properties, and various functional and algebraic inequalities.
%%%%%%%%%%%%%%%%%%%%%%%%%%%%%%%%%%%%%%%%%%%%%%%%%%%%%%%%%%%%%%%%%%%%%%%%%

\section{Preliminaries}
In this section, we present some preliminary knowledge needed in our proofs hereafter. First, we recall that the fundamental solution
$S_\beta=S_\beta(x,t)$ of the linear equation
\begin{equation}\label{linearequ+}
u_t+(-\Delta)^{\beta/2}u=0,\quad \beta\in(0,2],\;
x\in\mathbb{R}^n,\;t>0,
\end{equation}
can be written  via the Fourier transform
as follows
\begin{equation}\label{3.3}
S_\beta(x,t)=\frac{1}{(2\pi)^{n/2}}\int_{\mathbb{R}^n}e^{ix.\xi-t|\xi|^\beta}\,d\xi.
\end{equation}
It is well-known that for each $\beta\in(0,2],$ this function
satisfies
\begin{equation}\label{P_1+}
    S_\beta(1)\in L^\infty(\mathbb{R}^n)\cap
L^1(\mathbb{R}^n),\quad
S_\beta(x,t)\geq0,\quad\int_{\mathbb{R}^n}S_\beta(x,t)\,dx=1,
\end{equation}
\noindent for all $x\in\mathbb{R}^n$ and $t>0.$ Hence, using Young's inequality for the convolution and the following self-similar form  $$S_\beta(x,t)=t^{-n/\beta}S_\beta(xt^{-1/\beta},1),$$ we have
\begin{lemma}[$L^p-L^q$ estimate]\label{Lp-Lqestimate} 
	Let $\beta\in(0,2]$, and $1\leq r\leq q\leq\infty$. Then there exists a positive constant $C$ such that for every $v\in L^r(\mathbb{R}^{n})$, the following inequalities hold
	\begin{eqnarray}
		\|S_\beta(t)\ast v\|_q&\leq &Ct^{-\frac{n}{\beta}(\frac{1}{r}-\frac{1}{q})}\|v\|_r,\qquad t>0,\label{semigroup1}\\
		\|S_\beta(t)\ast v\|_r&\leq& \|v\|_r,\qquad t>0.\label{semigroup2}
	\end{eqnarray}
\end{lemma}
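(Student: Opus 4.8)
The plan is to derive both inequalities directly from Young's convolution inequality together with the self-similar structure of $S_\beta$ recorded just above the statement. Write $S_\beta(t)\ast v(x)=\int_{\mathbb{R}^n}S_\beta(x-y,t)\,v(y)\,dy$ and recall that Young's inequality gives, for exponents satisfying $1+\tfrac1q=\tfrac1m+\tfrac1r$ with $1\le m,r,q\le\infty$,
\[
\|S_\beta(t)\ast v\|_q\le \|S_\beta(t)\|_m\,\|v\|_r .
\]
Since $1\le r\le q\le\infty$, the required exponent $m$, determined by $\tfrac1m=1+\tfrac1q-\tfrac1r\in[0,1]$, i.e. $m\in[1,\infty]$, is admissible.

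Next I would estimate $\|S_\beta(t)\|_m$. From the scaling identity $S_\beta(x,t)=t^{-n/\beta}S_\beta(xt^{-1/\beta},1)$, the change of variables $z=xt^{-1/\beta}$ yields
\[
\|S_\beta(t)\|_m = t^{-\frac n\beta}\,t^{\frac{n}{\beta m}}\,\|S_\beta(1)\|_m = t^{-\frac n\beta\left(1-\frac1m\right)}\|S_\beta(1)\|_m .
\]
By \eqref{P_1+} we have $S_\beta(1)\in L^1(\mathbb{R}^n)\cap L^\infty(\mathbb{R}^n)$, hence by interpolation $S_\beta(1)\in L^m(\mathbb{R}^n)$ for every $m\in[1,\infty]$, so the constant $C:=\|S_\beta(1)\|_m$ is finite. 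Substituting $1-\tfrac1m=\tfrac1r-\tfrac1q$ gives precisely $\|S_\beta(t)\ast v\|_q\le C\,t^{-\frac n\beta\left(\frac1r-\frac1q\right)}\|v\|_r$, which is \eqref{semigroup1}.

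For \eqref{semigroup2} one simply takes $q=r$, so that $m=1$; then $\|S_\beta(t)\|_1=\int_{\mathbb{R}^n}S_\beta(x,t)\,dx=1$ by \eqref{P_1+} (the mass is conserved and $S_\beta\ge0$), and Young's inequality gives $\|S_\beta(t)\ast v\|_r\le\|v\|_r$ with no time weight. There is no real obstacle here: the only point requiring care is confirming the admissibility of the Young exponent $m$ and the finiteness of $\|S_\beta(1)\|_m$ for all $m\in[1,\infty]$, which in the fractional case $\beta<2$ (where no explicit kernel is available) follows from the $L^1\cap L^\infty$ membership stated in \eqref{P_1+} via interpolation. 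I would also note that nonnegativity of $S_\beta$ is used only to identify $\|S_\beta(t)\|_1$ with the conserved mass $1$.
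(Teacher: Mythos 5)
Your argument is correct and is exactly the route the paper indicates: Young's convolution inequality combined with the self-similar form $S_\beta(x,t)=t^{-n/\beta}S_\beta(xt^{-1/\beta},1)$ and the properties of $S_\beta(1)$ in \eqref{P_1+} (with interpolation giving $S_\beta(1)\in L^m$ for all $m\in[1,\infty]$, and unit mass giving the $m=1$ case). Nothing is missing; your write-up just spells out the details the paper leaves implicit.
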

The following Hardy-Littlewood-Sobolev is a particular case of \cite[Theorem~4.3]{LiebLoss}.
\begin{lemma}[Hardy-Littlewood-Sobolev]\label{Hardy}${}$\\
	Let $1< p< r<\infty$ and $0<\alpha<n$ with $1/p+\alpha/n=1+1/r$. Then there exists a positive constant $C=C(n,\alpha,p)>0$ such that for every $f\in L^p(\mathbb{R}^{n})$, the following inequality holds
	\begin{equation}\label{Hardy-Littlewood}
		\||x|^{-\alpha}\ast f\|_r\leq C \|f\|_p.
	\end{equation}
\end{lemma}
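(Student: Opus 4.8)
The statement to prove is the Hardy--Littlewood--Sobolev inequality, Lemma \ref{Hardy}: for $1<p<r<\infty$, $0<\alpha<n$ with $1/p+\alpha/n=1+1/r$, one has $\||x|^{-\alpha}\ast f\|_r\le C\|f\|_p$. Since the paper explicitly says this is ``a particular case of \cite[Theorem~4.3]{LiebLoss}'', the cleanest route is simply to cite that result and observe that our hypotheses are a special case; but if a self-contained argument is wanted, the plan is to use the layer-cake / weak-type approach of Hunt (or the real-interpolation argument). First I would record that $|x|^{-\alpha}$ lies in the weak Lebesgue space $L^{q,\infty}(\mathbb{R}^n)$ with $1/q = \alpha/n$, i.e. $\mathrm{meas}\{|x|^{-\alpha}>\lambda\}=c_n\lambda^{-n/\alpha}$, and note that the scaling relation $1/p+\alpha/n=1+1/r$ is exactly the requirement $1/r = 1/p + 1/q - 1$ for Young's inequality in the Lorentz scale.

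The core step is the weak-type version of Young's convolution inequality in Lorentz spaces: if $g\in L^{q,\infty}$ and $f\in L^p$ with $1<p$, $1<q$, and $1/r=1/p+1/q-1\in(0,1)$, then $\|f\ast g\|_{r,\infty}\lesssim \|g\|_{L^{q,\infty}}\|f\|_{L^p}$ — in fact one can even get the strong $L^r$ bound on the left when $p>1$ and $r<\infty$, which is precisely our range. I would prove the weak bound directly: split $g = g_1 + g_2$ where $g_1 = g\,\mathbf{1}_{\{|g|\le s\}}$ and $g_2 = g\,\mathbf{1}_{\{|g|> s\}}$ for a parameter $s=s(\lambda)$ to be chosen. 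Then $\|f\ast g_1\|_\infty \le \|f\|_p \|g_1\|_{p'}$, and since $g\in L^{q,\infty}$ with $q>p'$ (equivalently $r>1$), the truncated tail integral $\|g_1\|_{p'}^{p'} = \int_{\{|g|\le s\}}|g|^{p'}$ is controlled by $C\, s^{p'-q}\|g\|_{L^{q,\infty}}^q$; similarly $\|f\ast g_2\|_p \le \|f\|_p\|g_2\|_1$ and $\|g_2\|_1 = \int_{\{|g|>s\}}|g| \le C\, s^{1-q}\|g\|_{L^{q,\infty}}^q$. Choosing $s$ so that the $L^\infty$ piece is $\le \lambda/2$ forces a bound $\mathrm{meas}\{|f\ast g|>\lambda\}\le \mathrm{meas}\{|f\ast g_2|>\lambda/2\}\le (2/\lambda)^p\|f\ast g_2\|_p^p$, and optimizing in $s$ yields $\mathrm{meas}\{|f\ast g|>\lambda\}\lesssim \lambda^{-r}\|f\|_p^r\|g\|_{L^{q,\infty}}^r$, i.e. $f\ast g\in L^{r,\infty}$.

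To upgrade the weak-type estimate to the strong $L^r$ bound claimed in the lemma, I would invoke the Marcinkiewicz interpolation theorem: the map $T_g : f\mapsto f\ast g$ is, by the argument above applied at two nearby exponents $p_0<p<p_1$ (all still in the open range $(1,\infty)$ with corresponding $r_0, r_1\in(1,\infty)$ determined by the same scaling identity), of weak types $(p_0,r_0)$ and $(p_1,r_1)$; since $p_0<p_1$ implies $r_0<r_1$ and $p\mapsto r$ is monotone along the line, $(p,r)$ lies strictly between, so Marcinkiewicz gives the strong bound $\|f\ast g\|_r\le C\|f\|_p$ with $C$ depending only on $n,\alpha,p$ through the weak-type constants. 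Finally, applying this with $g(x)=|x|^{-\alpha}$ and $\|g\|_{L^{n/\alpha,\infty}} = c(n,\alpha)<\infty$ gives \eqref{Hardy-Littlewood}.

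The main obstacle — really the only nontrivial point — is the bookkeeping in the splitting argument: one must verify the exponent arithmetic ($p'<q<\infty$, $p'-q<0$, $1-q<0$, and that the optimization in $s$ reproduces exactly the exponent $-r$ on $\lambda$), and confirm that the endpoint exponents $p_0,p_1$ needed for Marcinkiewicz can indeed be taken inside $(1,\infty)$ with their partners $r_0,r_1$ also inside $(1,\infty)$ — this uses $p>1$ and $r<\infty$ in an essential way and is exactly why the borderline cases are excluded. Given that the paper only needs the stated (non-endpoint) range, I would in practice present the one-line deduction from \cite[Theorem~4.3]{LiebLoss} and relegate the interpolation sketch to a remark, since reproving HLS in full is outside the scope of this paper.
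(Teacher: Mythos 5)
Your primary route---deducing the lemma directly from \cite[Theorem~4.3]{LiebLoss}---is exactly what the paper does, since the paper states the lemma as a particular case of that theorem and gives no independent proof. Your optional interpolation sketch is the standard argument and would work, though note the exponent comparison is reversed: in this range one has $p'>q=n/\alpha$ (equivalently $r<\infty$), which is what makes $\int_{\{|g|\le s\}}|g|^{p'}\le C\,s^{p'-q}\|g\|_{L^{q,\infty}}^{q}$ finite, not $q>p'$ as written.
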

\begin{definition}[\cite{Kwanicki,Silvestre}]\label{def4}
\fontshape{n}
\selectfont
Let $s \in (0,1)$. Let $X$ be a suitable set of functions defined on $\mathbb{R}^n$. Then, the fractional Laplacian $(-\Delta)^s$ in $\mathbb{R}^n$ is a non-local operator given by
$$ (-\Delta)^s: \,\,v \in X  \to (-\Delta)^s v(x):= C_{n,s}\,\, p.v.\int_{\mathbb{R}^n}\frac{v(x)- v(y)}{|x-y|^{n+2s}}dy, $$
as long as the right-hand exists, where $p.v.$ stands for Cauchy's principal value and $$C_{n,s}:= \frac{4^s \Gamma(\frac{n}{2}+s)}{\pi^{\frac{n}{2}}\Gamma(-s)}$$ is a normalization constant.
\end{definition}

\begin{lemma}\label{lemma4}\cite[Lemma~2.4]{DaoReissig}
Let $s \in (0,1]$, and $\varphi$ be a smooth function satisfying $\partial_x^2\varphi\in L^\infty(\mathbb{R}^n)$. For any $R>0$, let $\varphi_R$ be a function defined by
$$\varphi_R(x):= \varphi(x/R) \quad \text{ for all } x \in \mathbb{R}^n.$$
Then, $(-\Delta)^s \varphi_R$ satisfies the following scaling properties:
$$(-\Delta)^s \varphi_R(x)= R^{-2s}((-\Delta)^s\varphi)(x/R) \quad \text{ for all } x \in \mathbb{R}^n. $$
\end{lemma}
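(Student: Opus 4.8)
The plan is to prove the identity directly from the pointwise singular-integral representation of $(-\Delta)^s$ in Definition~\ref{def4}, by carrying out the dilation $y=Rz$ inside the integral and keeping track of the resulting powers of $R$; the endpoint $s=1$, which is not covered by that definition, is handled separately via the classical chain rule. For $s\in(0,1)$ and a fixed $x\in\mathbb{R}^n$, Definition~\ref{def4} together with $\varphi_R(y)=\varphi(y/R)$ gives
$$(-\Delta)^s\varphi_R(x)=C_{n,s}\lim_{\varepsilon\to0^+}\int_{|x-y|>\varepsilon}\frac{\varphi(x/R)-\varphi(y/R)}{|x-y|^{n+2s}}\,dy.$$
In each truncated integral substitute $y=Rz$, so that $dy=R^n\,dz$, the set $\{|x-y|>\varepsilon\}$ becomes $\{|x/R-z|>\varepsilon/R\}$, and $|x-y|^{n+2s}=R^{n+2s}|x/R-z|^{n+2s}$. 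Factoring out $R^n/R^{n+2s}=R^{-2s}$ yields
$$\int_{|x-y|>\varepsilon}\frac{\varphi(x/R)-\varphi(y/R)}{|x-y|^{n+2s}}\,dy=R^{-2s}\int_{|x/R-z|>\varepsilon/R}\frac{\varphi(x/R)-\varphi(z)}{|x/R-z|^{n+2s}}\,dz,$$
and since $\varepsilon\to0^+$ is equivalent to $\varepsilon/R\to0^+$, letting $\varepsilon\downarrow0$ and invoking Definition~\ref{def4} at the point $x/R$ gives precisely $(-\Delta)^s\varphi_R(x)=R^{-2s}((-\Delta)^s\varphi)(x/R)$.

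The only point requiring care is the legitimacy of this manipulation: each truncated integral above is an ordinary, absolutely convergent Lebesgue integral on which the linear change of variables is unproblematic, so it remains to verify that the limiting principal value exists and that the dilation commutes with it. This is exactly where the hypothesis $\partial_x^2\varphi\in L^\infty(\mathbb{R}^n)$ is used: a second-order Taylor expansion gives $|\varphi(w)-\varphi(z)+\nabla\varphi(w)\cdot(z-w)|\le C\,\|\partial_x^2\varphi\|_{L^\infty}|z-w|^2$, so near the diagonal the integrand is $O(|z-w|^{2-n-2s})$, which is locally integrable because $2-2s>0$, while the odd part $\nabla\varphi(w)\cdot(z-w)$ integrates to zero over symmetric annuli $\{\delta<|z-w|<\varepsilon_0\}$; together with the mild decay of $\varphi$ at infinity (satisfied by the admissible test functions to which the lemma is applied) this makes the principal value well defined and invariant under the rescaling $\varepsilon\mapsto\varepsilon/R$. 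I regard this verification as the main, though still essentially routine, obstacle; everything else is just bookkeeping of the exponents of $R$.

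Finally, for $s=1$ the operator $(-\Delta)^1=-\Delta$ is local, and the identity reduces to the elementary chain rule $\partial_{x_i}\varphi_R(x)=R^{-1}(\partial_i\varphi)(x/R)$, whence $-\Delta\varphi_R(x)=R^{-2}\big((-\Delta)\varphi\big)(x/R)$. Combining the two cases establishes the scaling property $(-\Delta)^s\varphi_R(x)=R^{-2s}((-\Delta)^s\varphi)(x/R)$ for every $s\in(0,1]$.
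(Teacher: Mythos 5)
Your proof is correct: the change of variables $y=Rz$ in the truncated integrals, the identification $|x-Rz|=R|x/R-z|$, and the observation that $\varepsilon\to0^+$ is equivalent to $\varepsilon/R\to0^+$ give exactly the claimed scaling, and the $s=1$ case by the chain rule is fine. The paper itself does not prove this lemma but imports it from \cite[Lemma~2.4]{DaoReissig}, and your argument is the standard one underlying that reference, including the correct use of $\partial_x^2\varphi\in L^\infty$ (second-order Taylor plus the vanishing of the odd gradient term on symmetric annuli) to justify the principal value near the diagonal; the only additional ingredient, which you rightly flag, is boundedness (or mild growth) of $\varphi$ for convergence of the far-field part, and this is satisfied by the bounded cutoff functions to which the lemma is applied in the paper.
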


%%%%%%%%%%%%%%%%%%%%%%%%%%%%%%%%%%%%%%%%%%%%%%%%%%%%%%%%%%%%

%%%%%%%%%%%%%%%

\section{Local existence. }

\begin{proof}[Proof of Theorem \ref{localexistence}] The proof is divided into several steps.\\
	{\it Step 1. Fixed-point argument.} Let $T>0$ be fixed. We define the Banach space
$$E_T=L^\infty((0,T),L^s(\mathbb{R}^n)\cap L^\infty(\mathbb{R}^{n})).$$
The norm on $E_T$ is defined by
$$\|u\|_{E_T}=\sup_{t \in (0,T)}\|u(t)\|_{L^{s}\cap L^\infty}= \sup_{t \in (0,T)}\left(\|u(t)\|_{L^{s}}+\|u(t)\|_{L^{\infty}}\right).$$
We choose $R\geq  \|u_0\|_{L^s\cap L^\infty}$. In order to use the Banach fixed-point theorem, we introduce the following nonempty complete metric space
	\begin{equation}\label{norm}
		B_T(R)=\{u\in E_T:\,\,\|u\|_{E_T}\leq 2 R\},
	\end{equation}
	equipped with the distance $d(u,v)=\|u-v\|_E $.	For $u\in B_{T}(R)$, we define $\Lambda(u)$ by
	\begin{equation}\label{solop}
		\Lambda(u)(t):=S_{\beta}(t)u_0 + \int_{0}^{t}S_{\beta}(t-\tau )  I_\alpha(|u|^{p})(\tau ) \,\mathrm{d}\tau .
	\end{equation}
	Let us prove that  $\Lambda: B_{T}(R) \rightarrow B_{T}(R)$. Using \eqref{semigroup2}, we obtain for any $u \in B_{T}(R)$, 
	\begin{align*}
		\|\Lambda(u)(t)\|_{L^{s}} & \leq \|S_{\beta}(t)u_0\|_{L^{s}} + \left\| \int_{0}^{t}S_{\beta}(t-\tau)  I_\alpha(|u|^p)(\tau) \,\mathrm{d}\tau \right\|_{L^{s}}  \\
				&\leq \|u_0\|_{L^s}+  \int_{0}^{t} \|I_\alpha (|u|^p)(\tau)\|_{L^s} \, \mathrm{d}\tau,
				\end{align*}	
		for all $t\in (0,T)$. Applying Lemma \ref{Hardy} with the identity $1/q=\alpha/n+1/s$, and noting that the condition $n/(n-\alpha)<s$ guarantees $q>1$, we get
		\begin{align*}
		\|\Lambda(u)(t)\|_{L^{s}} &\leq \|u_0\|_{L^s}+   C\int_{0}^{t} \||u(\tau)|^p\|_{L^{q}} \, \mathrm{d}\tau.
				\end{align*}	
			Since $s<n(p-1)/\alpha$ implies $1/q<p/s$, then $p>s/q$ and therefore
		\begin{align}\label{fix1}
		\|\Lambda(u)(t)\|_{L^{s}} &\leq \|u_0\|_{L^s}+  C \int_{0}^{t} \|u(\tau)\|^{p-\frac{s}{q}}_{L^{\infty}} \||u(\tau)|^{\frac{s}{q}}\|_{L^{q}} \, \mathrm{d}\tau\notag\\
		&= \|u_0\|_{L^s}+   C\int_{0}^{t} \|u(\tau)\|^{p-\frac{s}{q}}_{L^{\infty}} \|u(\tau)\|^{\frac{s}{q}}_{L^{s}} \, \mathrm{d}\tau\notag\\
		&\leq \|u_0\|_{L^s} + C \|u\|^{p}_{E_T}  T\notag\\
		&\leq R+ C 2^pR^p  T\notag\\
		&\leq 2R,
				\end{align}
	for all $t\in (0,T)$, for a sufficiently small $T>0$ (depending on $R$). Moreover, by \eqref{semigroup1}-\eqref{semigroup2}, we have
	\begin{align*}
		\|\Lambda(u)(t)\|_{L^{\infty}} & \leq \|S_{\beta}(t)u_0\|_{L^{\infty}} + \left\| \int_{0}^{t}S_{\beta}(t-\tau)  I_\alpha(|u|^p)(\tau) \,\mathrm{d}\tau \right\|_{L^{\infty}}  \\
				&\leq \|u_0\|_{L^\infty}+C   \int_{0}^{t} (t-\tau)^{-\frac{n}{\beta r}} \|I_\alpha (|u|^p)(\tau)\|_{L^{r}} \, \mathrm{d}\tau,
				\end{align*}	
		for all $t\in (0,T)$.	Using Lemma \ref{Hardy} with $1/\tilde{q}=\alpha/n+1/r$, we get
		\begin{align*}
		\|\Lambda(u)(t)\|_{L^{\infty}} &\leq \|u_0\|_{L^\infty}+   C\int_{0}^{t} (t-\tau)^{-\frac{n}{\beta r}}\||u(\tau)|^p\|_{L^{\tilde{q}}} \, \mathrm{d}\tau.
				\end{align*}	
			By choosing $1/\tilde{q}<\min\{p/s,\,(\beta+\alpha)/n\}$, we infer that $n/(\beta r)<1$ and $p>s/\tilde{q}$. So,
		\begin{align}\label{fix2}
		\|\Lambda(u)(t)\|_{L^{\infty}} &\leq \|u_0\|_{L^\infty}+  C \int_{0}^{t}(t-\tau)^{-\frac{n}{\beta r}} \|u(\tau)\|^{p-\frac{s}{\tilde{q}}}_{L^{\infty}} \||u(\tau)|^{\frac{s}{\tilde{q}}}\|_{L^{\tilde{q}}} \, \mathrm{d}\tau\notag\\
		&= \|u_0\|_{L^\infty}+   C\int_{0}^{t}(t-\tau)^{-\frac{n}{\beta r}} \|u(\tau)\|^{p-\frac{s}{\tilde{q}}}_{L^{\infty}} \|u(\tau)\|^{\frac{s}{\tilde{q}}}_{L^{s}} \, \mathrm{d}\tau\notag\\
		&\leq \|u_0\|_{L^\infty} + C \|u\|^{p}_{E_T}  T^{1-\frac{n}{\beta r}}\notag\\
		&\leq R+ C 2^pR^p  T^{1-\frac{n}{\beta r}}\notag\\
		&\leq 2R,
				\end{align}
	for all $t\in (0,T)$, for a sufficiently small $T>0$ (depending on $R$). By combining \eqref{fix1}-\eqref{fix2}, we conclude that $\|\Lambda(u)\|_{E_T}\leq 2 R$, that is  $\Lambda(u) \in B_{T}(R)$.\\
	Similarly, one can see that $\Lambda$ is a contraction. For $u,v \in B_T(R)$, we have
$$
		\|\Lambda(u)(t)-\Lambda(v)(t)\|_{L^{s}}  \leq \int_{0}^{t} \|I_\alpha (||u|^{p}-|v|^{p}|)(\tau)\|_{L^s} \, \mathrm{d}\tau,\qquad\hbox{for all}\,\, t\in (0,T). 
	$$
Using Lemma \ref{Hardy} with $1/q=\alpha/n+1/s$, we get
		\begin{align*}
		\|\Lambda(u)(t)-\Lambda(v)(t)\|_{L^{s}} &\leq   C\int_{0}^{t} \||u(\tau)|^{p}-|v(\tau)|^{p}\|_{L^{q}} \, \mathrm{d}\tau.
				\end{align*}	
		As 
		$$||u(\tau)|^{p}-|v(\tau)|^{p}|\leq C(|u(\tau)|^{p-1}+|v(\tau)|^{p-1})|u(\tau)-v(\tau)|$$
		 and 
		 $$\frac{1}{q}=\frac{\alpha}{n}+\frac{1}{s},$$
			by applying H\"older's inequality, we obtain	
			$$\||u(\tau)|^{p}-|v(\tau)|^{p}\|_{L^{q}}\leq C(\||u(\tau)|^{p-1}\|_{L^{\frac{n}{\alpha}}}+\||v(\tau)|^{p-1}\|_{L^{\frac{n}{\alpha}}})\|u(\tau)-v(\tau)\|_{L^{s}}.$$	
				Since $s<n(p-1)/\alpha$ implies $p-1>s\alpha/n$, we conclude that
				$$\||u(\tau)|^{p-1}\|_{L^{\frac{n}{\alpha}}}\leq \|u(\tau)\|^{p-1-\frac{s\alpha}{n}}_{L^{\infty}}\|u(\tau)\|^{\frac{s\alpha}{n}}_{L^s}\leq (2R)^{p-1},$$
			and
				$$\||v(\tau)|^{p-1}\|_{L^{\frac{n}{\alpha}}}\leq \|v(\tau)\|^{p-1-\frac{s\alpha}{n}}_{L^{\infty}}\|v(\tau)\|^{\frac{s\alpha}{n}}_{L^s}\leq (2R)^{p-1}.$$
			Therefore,
$$
			\||u(\tau)|^{p}-|v(\tau)|^{p}\|_{L^{q}}\leq  C 2^p\,R^{p-1}\|u(\tau)-v(\tau)\|_{L^{s}}.
		$$
			It follows that
						\begin{equation}\label{fix3}
		\|\Lambda(u)(t)-\Lambda(v)(t)\|_{L^{s}} \leq C 2^pR^{p-1} T\|u-v\|_{E_T}\leq \frac{1}{2}d(u,v),
				\end{equation}
	for all $t\in (0,T)$, for a sufficiently small $T>0$ (depending on $R$). Moreover, 
	\begin{align*}
		\|\Lambda(u)(t)-\Lambda(v)(t)\|_{L^{\infty}} & =\left\| \int_{0}^{t}S_{\beta}(t-\tau)  I_\alpha(|u|^{p}-|v|^{p})(\tau) \,\mathrm{d}\tau \right\|_{L^{\infty}}  \\
				&\leq C   \int_{0}^{t} (t-\tau)^{-\frac{n}{\beta r}} \|I_\alpha (||u|^{p}-|v|^{p}|)(\tau)\|_{L^{r}} \, \mathrm{d}\tau,
				\end{align*}	
		for all $t\in (0,T)$.	Using Lemma \ref{Hardy} with $1/\tilde{q}=\alpha/n+1/r$, we get
		\begin{align*}
		\|\Lambda(u)(t)-\Lambda(v)(t)\|_{L^{\infty}} &\leq  C\int_{0}^{t} (t-\tau)^{-\frac{n}{\beta r}}\||u(\tau)|^{p}-|v(\tau)|^{p}\|_{L^{\tilde{q}}} \, \mathrm{d}\tau.
				\end{align*}	
			As 
		$$|u(\tau)|^{p}-|v(\tau)|^{p}\leq C(|u(\tau)|^{p-1}+|v(\tau)|^{p-1})|u(\tau)-v(\tau)|$$
		 and 
		 $$\frac{1}{\tilde{q}}=\frac{\alpha}{n}+\frac{1}{r},$$
			by applying H\"older's inequality, we obtain	
			$$\||u(\tau)|^{p-1}-|v(\tau)|^{p-1}\|_{L^{\tilde{q}}}\leq C(\||u(\tau)|^{p-1}\|_{L^{\frac{n}{\alpha}}}+\||v(\tau)|^{p-1}\|_{L^{\frac{n}{\alpha}}})\|u(\tau)-v(\tau)\|_{L^{r}}.$$	
				Since $s<n(p-1)/\alpha$ implies $p-1>s\alpha/n$, we conclude that
				$$\||u(\tau)|^{p-1}\|_{L^{\frac{n}{\alpha}}}\leq \|u(\tau)\|^{p-1-\frac{s\alpha}{n}}_{L^{\infty}}\|u(\tau)\|^{\frac{s\alpha}{n}}_{L^s}\leq (2R)^{p-1},$$
			and
				$$\||v(\tau)|^{p-1}\|_{L^{\frac{n}{\alpha}}}\leq \|v(\tau)\|^{p-1-\frac{s\alpha}{n}}_{L^{\infty}}\|v(\tau)\|^{\frac{s\alpha}{n}}_{L^s}\leq (2R)^{p-1}.$$
				Furthermore, by choosing $1/\tilde{q}<\min\{p\alpha/n(p-1),\, (\beta+\alpha)/n\}$, we infer that $n/(\beta r)<1$ and $r>n(p-1)/\alpha>s$, so
				$$\|u(\tau)-v(\tau)\|_{L^{r}}\leq \|u(\tau)-v(\tau)\|^{1-\frac{s}{r}}_{L^\infty}\|u(\tau)-v(\tau)\|^{\frac{s}{r}}_{L^s}\leq \|u-v\|_{E_T}.$$
			Noting that $1/\tilde{q}<\min\{p\alpha/n(p-1),\, (\beta+\alpha)/n\}<\min\{p/s,\,(\beta+\alpha)/n\}$, due to $s<n(p-1)/\alpha$. Therefore,
$$
			\||u(\tau)|^{p}-|v(\tau)|^{p}\|_{L^{\tilde{q}}}\leq C2^p\,R^{p-1}\|u-v\|_{E_T}.
		$$
		It follows that
		\begin{equation}\label{fix4}
		\|\Lambda(u)(t)-\Lambda(v)(t)\|_{L^{\infty}} \leq C 2^pR^{p-1} T^{1-\frac{n}{\beta r}}\|u-v\|_{E_T}\leq \frac{1}{2}d(u,v),
				\end{equation}
	for all $t\in (0,T)$, for a sufficiently small $T>0$ (depending on $R$). By combining \eqref{fix3}-\eqref{fix4}, we conclude that 
$$
		d(\Lambda(u),\Lambda(v)) \leq \frac{1}{2}d(u,v).
$$
	Consequently, by the contraction principle, $\Lambda$  has a unique fixed point $u$ in $B_T(R)$.\\
	
	\noindent {\it Step 2. Uniqueness in $E_T$.} We are going  to extend this uniqueness to $E_T$. Let $u,v \in E_T$.  Then, for sufficiently large $\tilde{R}>0$ and small $\tilde{T}>0$, we have $u,v \in B_{\tilde{T}}(\tilde{R})$. As a result, $u(t)= v(t)$ for small $t>0$, which, by a standard continuation argument, extends to the entire interval $(0,T)$. See, e.g., \cite{FinoViana} for further details.\\
	
	\noindent {\it Step 3. Regularity.} One can easily check that $f\in L^1((0,T),L^s(\mathbb{R}^{n}))$, with
	$$f(t):=I_\alpha(|u|^{p})(t),\qquad \hbox{for all}\,\,t\in (0,T).$$
	Applying \cite[Lemma~4.1.5]{CH}, using the continuity of the semigroup $S_{\beta}(t)$,  we conclude that $u\in C([0,T],L^s(\mathbb{R}^{n}))$.\\
	
	\noindent {\it Step 4. Maximal solution and blow-up alternative.} Using the uniqueness of the mild solution, we conclude the existence
	of a solution on a maximal interval $[0,T_{\max})$ where
	\[
	T_{\max}:=\sup\left\{T>0\;;\;\text{$\exists$ mild solution $u\in C([0,T],L^s(\mathbb{R}^{n}))\cap L^\infty((0,T),L^\infty(\mathbb{R}^{n}))$
		to \eqref{44}}\right\}.
	\]
	Obviously, $T_{\max}\leq \infty$. To prove that $\|u(t)\|_{L^s(\mathbb{R}^{n})\cap L^\infty(\mathbb{R}^{n})}\rightarrow\infty$ as $t\rightarrow T_{\max},$
	whenever $T_{\max}<\infty,$ we proceed by contradiction. If
	$$\liminf_{t\rightarrow T_{\max}}\|u(t)\|_{L^s(\mathbb{R}^{n})\cap L^\infty(\mathbb{R}^{n})}=:L<\infty,$$
	then there exists a time sequence $\{t_m\}_{m\geq0}$ tending to $T_{\max}$ as $m\rightarrow\infty$ and such that
	$$\sup_{m\in\mathbb{N}}\|u(t_m)\|_{L^s\cap L^\infty}\leq L+1.$$
	Using again a fixed-point argument with $u(t_m)$ as initial condition, one can deduce that there exists $T(L + 1) > 0$, depends on $L+1$, such that the solution
	$u(t)$ can be extended on the interval $[t_m, t_m + T(L + 1)]$ for any $m\geq0$. Thus, by the definition of the maximality time, $T_{\max}\geq t_m+T(L+1)$, for any $m\geq0$. We get the desired contradiction by letting $m\rightarrow\infty$.\\
	
	\noindent {\it Step 5. Positivity of solutions.} Assume $u_0\geq0$.  In this case, we can construct a nonnegative solution on some interval $[0,T]$ by applying the fixed point argument within the positive cone $E_T^+=\{u\in E_T;\;u\geq0\}$, and using the nonnegativity of $S_\beta(t)$ (see \eqref{P_1+}). By the uniqueness of the solutions, it then follows that $u(t)\geq0$ for all $t\in(0,T_{\max}).$\\

\end{proof}

%%%%%%%%%%%%%%%%%%%%%%%%%%%%%%%%%%%%%%%%%%%%%%%%%%%%%%%%%%%%%%%%%%%%%%%%%%%%%%%

\section{Blow-up}\label{bll}

\begin{proof}[Proof of Theorem \ref{global}-(i)]
The proof is by contradiction. Assume that $u$ is a global mild solution of \eqref{44}. Then $u$ satisfies the following weak formulation
\begin{eqnarray*}
&{}&\int_0^T\int_{\mathbb{R}^n}I_\alpha(u^p)\psi(t,x)\,dx\,dt+\int_{\mathbb{R}^n}u(0,x)\psi(0,x)\,dx\\
&{}&=\int_0^T\int_{\mathbb{R}^n}u\,(-\Delta)^{\frac{\beta}{2}}\psi(t,x)\,dx\,dt-\int_0^T\int_{\mathbb{R}^n}u\,\psi_t(t,x)\,dx\,dt
\end{eqnarray*}
 for all $T>0$ and all compactly supported $\psi\in C^{2,1}([0,T]\times\mathbb{R}^n)$ such that $\psi(T,\cdotp)=0$.
 Let $R$ and $T$ be large parameters in $(0,\infty)$. Let us choose 
$$\psi(t,x):= \varphi^\ell_R(x) \varphi^\ell_T(t),$$
where
$$
\varphi_R(t)= \Phi\left(\frac{|x|}{R}\right),\qquad \varphi_T(t)= \Phi\left(\frac{t}{T}\right),\qquad x\in\mathbb{R}^n,\,\, t>0,
$$
with $\ell=(2p-1)/(p-1)$, and $\Phi\in \mathcal{C}^\infty(\mathbb{R})$ is a smooth non-increasing function 
satisfying $\mathbbm{1}_{(-\infty,\frac{1}{2}]} \leq \Phi\leq \mathbbm{1}_{(-\infty,1]}$. Then,
\begin{eqnarray}\label{3}
\int_0^T\int_\mathcal{B}I_\alpha(|u|^p)\psi(t,x)\,dx\,dt+\int_{\mathcal{B}}u_0(x)\varphi^{\ell}_R(x)\,dx&=&\int_0^T\int_{\mathcal{B}}u\,\varphi^\ell_T(t)(-\Delta)^{\frac{\beta}{2}}\varphi^{\ell}_R(x)\,dx\,dt\nonumber\\
&{}&\quad -\int_{\frac T2}^T\int_{\mathcal{B}}u\,\varphi^{\ell}_R(x)\partial_t(\varphi^{\ell}_T(t))\,dx\,dt\nonumber\\
&=:&I_1+I_2,
\end{eqnarray}
where $\mathcal{B}=\left\{x\in\mathbb{R}^n;\,\,|x|\leq R\right\}$. Let us first derive an estimate for $I_1$. Using  H\"older's inequality together with  Ju's inequality $(-\Delta)^{\beta/2}\left(\varphi_{R}^\ell\right)\leq \ell\varphi_{R}^{\ell-1}(-\Delta)^{\beta/2}\varphi_{R}$ (see e.g. \cite[Appendix]{Fino5}), we have
\begin{eqnarray}\label{4}
I_1&=&\int_0^T\int_{\mathcal{B}}u\,\varphi^\ell_T(t)(-\Delta)^{\frac{\beta}{2}}\varphi^{\ell}_R(x)\,dx\,dt\nonumber\\
&\leq&\ell \int_0^T\int_{\mathcal{B}}u\,\psi^{\frac{1}{p}}(t,x)\psi^{-\frac{1}{p}}(t,x)\varphi^\ell_T(t)\varphi^{\ell-1}_R(x)\left|(-\Delta)^{\frac{\beta}{2}}\varphi_R(x)\right|\,dx\,dt\nonumber\\
&\leq&\ell\left(\int_0^T\int_{\mathcal{B}}u^{p}\psi(t,x)\,dx\,dt\right)^{\frac{1}{p}}\left(\int_0^T\int_{\mathcal{B}}\varphi^{\ell}_T(t)\varphi_R(x)\,\left|(-\Delta)^{\frac{\beta}{2}}\varphi_R(x)\right|^{\frac{p}{p-1}}\,dx\,dt\right)^{\frac{p-1}{p}} .
\end{eqnarray}
Similarly, by $\partial_t\varphi^\ell_T(t)=\ell\varphi_T^{\ell-1}(t)\partial_t\varphi_T(t)$, we obtain
\begin{equation}\label{5}
I_2\leq\ell \left(\int_{\frac T2}^T\int_{\mathcal{B}}u^{p}\psi(t,x)\,dx\,dt\right)^{\frac{1}{p}}\left(\int_{\frac T2}^T\int_{\mathcal{B}}\varphi_T(t)\varphi_R^{\ell}(x)\,\left|\partial_t\varphi_T(t)\right|^{\frac{p}{p-1}}\,dx\,dt\right)^{\frac{p-1}{p}}.
\end{equation}
Inserting \eqref{4}-\eqref{5} into \eqref{3}, we arrive at
\begin{eqnarray}\label{6}
&{}&\int_0^T\int_{\mathcal{B}}I_\alpha(u^p)\psi(t,x)\,dx\,dt+\int_{\mathcal{B}}u_0(x)\varphi^{\ell}_R(x)\,dx\nonumber\\
&{}&\leq J_1\,\left(\int_0^T\int_{\mathcal{B}}u^{p}\psi(t,x)\,dx\,dt\right)^{\frac{1}{p}}+\,J_2\,\left(\int_{\frac T2}^T\int_{\mathcal{B}}u^{p}\psi(t,x)\,dx\,dt\right)^{\frac{1}{p}}\qquad
\end{eqnarray}
where
$$J_1:=\ell\left(\int_0^T\int_{\mathcal{B}}\varphi^{\ell}_T(t)\varphi _R(x)\,\left|(-\Delta)^{\frac{\beta}{2}}\varphi_R(x)\right|^{\frac{p}{p-1}}\,dx\,dt\right)^{\frac{p-1}{p}},$$
and
$$J_2:=\ell\left(\int_{\frac T2}^T\int_{\mathcal{B}}\varphi _T(t)\varphi^{\ell}_R(x)\,\left|\partial_t\varphi_T(t)\right|^{\frac{p}{p-1}}\,dx\,dt\right)^{\frac{p-1}{p}} .$$
Let us estimate $J_2$. We have
\begin{eqnarray}\label{7}
J_2&\leq& \ell\left(\int_{\mathcal{B}}\varphi^{\ell}_R(x)\,dx\right)^{\frac{p-1}{p}}\left(\int_0^T\Phi\left(\frac{t}{T}\right)\,\left|\partial_t\Phi\left(\frac{t}{T}\right)\right|^{\frac{p}{p-1}}\,dt\right)^{\frac{p-1}{p}}\notag\\
&\leq&C\,R^{\frac{n(p-1)}{p}}T^{-\frac{1}{p}}\left(\int_0^1\Phi(\tilde{t})\,\left|\Phi^{\prime}(\tilde{t})\right|^{\frac{p}{p-1}}\,d\widetilde{t}\right)^{\frac{p-1}{p}}\notag\\
&\leq& C\,R^{\frac{n(p-1)}{p}}T^{-\frac{1}{p}}.
\end{eqnarray}
where we have used the change of variables
$$\widetilde{x}=\frac{x}{R},\qquad \widetilde{t}=\frac{t}{T}.$$
Similarly, by using Lemma \ref{lemma4}, we have
\begin{equation}\label{8}\begin{split}
J_1&= \ell\left(\int_0^T\varphi^\ell_T(t)\,dt\right)^{\frac{p-1}{p}}\left(\int_{\mathcal{B}}\varphi_R(x)\,\left|(-\Delta)^{\frac \beta 2}\varphi_R(x)\right|^{\frac{p}{p-1}}\,dx\right)^{\frac{p-1}{p}}\\&\leq C\,T^{\frac{p-1}{p}}R^{\frac{n(p-1)}{p}-\beta}.\end{split}
\end{equation}
By \eqref{7}-\eqref{8}, we get from \eqref{6} that
\begin{equation}\label{9}\begin{split}
\int_0^T\int_{\mathcal{B}}I_\alpha(u^p)\psi(t,x)\,dx\,dt+\int_{\mathcal{B}}u_0(x)\varphi^{\ell}_R(x)\,dx& \leq C\,T^{\frac{p-1}{p}}R^{\frac{n(p-1)}{p}-\beta}\textbf{I}^{\frac{1}{p}}\\&+C\, R^{\frac{n(p-1)}{p}}T^{-\frac{1}{p}}\textbf{J}^{\frac{1}{p}},\end{split}
\end{equation}
where
\[
\textbf{I}:= \int_0^T\int_{\mathcal{B}}u^p\psi(t,x)\,dx\,dt\qquad\hbox{and}\qquad\textbf{J}:= \int_{\frac T2}^T\int_{\mathcal{B}}u^p\psi(t,x)\,dx\,dt.
\]
To estimate the first term in the left-hand side of \eqref{9}, we have
$$I_\alpha(u^p)=|x|^{-(n-\alpha)}\ast u^p=\int_{\mathbb{R}^n}\frac{(u(t,\xi))^p}{ |x-\xi|^{n-\alpha}}\,d\xi\geq \int_{\mathcal{B}}\frac{(u(t,\xi))^p}{ |x-\xi|^{n-\alpha}}\,d\xi.$$
Note that $|x|\leq R/2$ on $\overline{\mathcal{B}}$ and $|\xi| \leq R$ on $\mathcal{B}$, where $\overline{\mathcal{B}}=\left\{x\in\mathbb{R}^n;\,\,|x|\leq R/2\right\}$,  then
\begin{equation}\label{triangle inequality}
|x-\xi|^{n-\alpha}\leq 2^{n-\alpha} R^{n-\alpha},\quad\hbox{for all}\,\,x\in\overline{\mathcal{B}},\,\xi\in\mathcal{B}.\end{equation}
So,
$$I_\alpha(u^p)(t,x)\geq  \frac{R^{-(n-\alpha)}}{2^{n-\alpha}}\int_{\mathcal{B}}(u(t,\xi))^p\,d\xi,\quad\hbox{for all}\,\,t\in(0,T),\,\,x\in\overline{\mathcal{B}}.$$
Then
\begin{eqnarray}\label{11}
\int_0^T\int_{\mathcal{B}}I_\alpha(u^p) \psi(t,x)\,dx\,dt&\geq&  \frac{R^{-(n-\alpha)}}{2^{n-\alpha}}\int_0^T\int_{\overline{\mathcal{B}}}\int_{\mathcal{B}}(u(t,\xi))^p\psi(t,x)\,d\xi\,dx\,dt\nonumber\\
&\geq&\hbox{meas$(\overline{\mathcal{B}})$}\frac{R^{-(n-\alpha)}}{2^{n-\alpha}}\int_0^T\int_{\mathcal{B}}(u(t,\xi))^p\psi(t,\xi)\,d\xi\,dt\nonumber\\
&=&CR^{\alpha}\,\textbf{I},
\end{eqnarray}
where we have used the fact that $\varphi_R\equiv 1$ on $\overline{\mathcal{B}}$, and $1\geq \varphi_R$ on $\mathcal{B}$.  Combining \eqref{9} and \eqref{11}, we infer that
\begin{equation}\label{10}
CR^{\alpha}\,\textbf{I}+\int_{\mathcal{B}}u_0(x)\varphi^{\ell}_R(x)\,dx \leq C\,T^{\frac{p-1}{p}}R^{\frac{n(p-1)}{p}-\beta}\textbf{I}^{\frac{1}{p}}+C R^{\frac{n(p-1)}{p}}T^{-\frac{1}{p}}\textbf{J}^{\frac{1}{p}}.
\end{equation}
We consider two separate cases. For $p<p_{\mathrm{Fuj}}(n,\beta,\alpha)$, choosing $R=T^{ 1/\beta}$, it follows from \eqref{10} and using $\textbf{J}\leq \textbf{I}$ and $u_0\geq0$, that
$$
T^{\frac{\alpha}{\beta}}\textbf{I}\leq C\,T^{\frac{n(p-1)-\beta}{\beta p}}\textbf{I}^{\frac{1}{p}},
$$
that is,
\begin{equation}\label{14}
\textbf{I}\leq C\,T^{\frac{(n-\alpha)p -n-\beta}{\beta (p-1)}}.
\end{equation}
Consequently, by letting $T\rightarrow\infty$ in \eqref{14}, and using the fact that $$p<p_{\mathrm{Fuj}}(n,\beta,\alpha)\Leftrightarrow (n-\alpha)p -n-\beta<0,$$ and the monotone convergence theorem, we arrive at 
$$\int_0^\infty\int_{\mathbb{R}^n}u^p\,dx\,dt\leq 0,$$
that is $u=0$ a.e., which, using \eqref{10}, implies that
$$0<\int_{\mathbb{R}^n}u_0(x)\,dx\leq 0;$$
contradiction.  In order to get a contradiction in the critical case $p=p_{\mathrm{Fuj}}(n,\beta,\alpha)$ too, we apply the same change of variables as before, using $u_0\geq 0$, we obtain from \eqref{10},
$$
\textbf{I}\leq C\,T^{\frac{p-1}{p}}R^{\frac{n(p-1)}{p}-\beta-\alpha}\textbf{I}^{\frac{1}{p}}+\,C R^{\frac{n(p-1)}{p}-\alpha}T^{-\frac{1}{p}}\textbf{J}^{\frac{1}{p}}.
$$
By setting $R=(KT)^{\frac 1\beta}$, where $K\ge 1$ and $K<T$, so that $T$ and $K$ cannot simultaneously tend to infinity, and considering the fact that $p=p_{\mathrm{Fuj}}(n,\beta,\alpha)$, we conclude that
$$
\textbf{I}\leq C\, K^{-\frac{\beta+\alpha}{\beta+n}}\textbf{I}^{\frac{1}{p}}+\, CK^{\frac{n-\alpha}{\beta+n}}\textbf{J}^{\frac{1}{p}}.
$$
Therefore, by using $\varepsilon$-Young's inequality $C\,K^{-\frac{\beta+\alpha}{\beta+n}}\textbf{I}^{\frac{1}{p}}\leq \varepsilon \textbf{I}+C_\varepsilon K^{-1}$ with $\varepsilon<1$, it follows that
\begin{equation}\label{weak2C}
(1-\varepsilon)\textbf{I}\leq C\, K^{-1}+C\,K^{\frac{n-\alpha}{\beta+n}}\textbf{J}^{\frac{1}{p}}.
	\end{equation}
	On the other hand, from \eqref{14} as $T\rightarrow\infty$, and taking into account that $p=p_{\mathrm{Fuj}}(n,\beta,\alpha)$, it follows that
	\begin{equation}\label{regularityA}
		u\in L^p((0,\infty),L^p(\mathbb{R}^{n})).
	\end{equation}
	Taking the limit as $T\rightarrow\infty$ in \eqref{weak2C}, and applying \eqref{regularityA} along with the Lebesgue dominated convergence theorem, we conclude that
	$$\int_0^\infty\int_{\mathbb{R}^n}u^p(t,x)\,dx\,dt\lesssim K^{-1}.$$
	Therefore, taking a sufficiently large $K$ we obtain the desired contradiction.\end{proof}
\section{Nonexistence}
\begin{proof}[Proof of Theorem \ref{theo11}] (i) Repeating a similar computation as in Section \ref{bll}, we obtain
\begin{equation}\label{900}\begin{split}
\int_0^T\int_{\mathcal{B}}(\mathcal{K}\ast|u|^p)\psi(t,x)\,dx\,dt+\int_{\mathcal{B}}u_0(x)\varphi^{\ell}_R(x)\,dx& \leq C\,T^{\frac{p-1}{p}}R^{\frac{n(p-1)}{p}-\beta}\textbf{I}^{\frac{1}{p}}\\&+C\, R^{\frac{n(p-1)}{p}}T^{-\frac{1}{p}}\textbf{J}^{\frac{1}{p}}.\end{split}
\end{equation}
 To estimate the first term in the left-hand side of \eqref{900}, we have
$$(\mathcal{K}\ast|u|^p)(x)=\int_{\mathbb{R}^n}\mathcal{K}(|x-y|)|u(y)|^p\,dy\geq \int_{\mathcal{B}}\mathcal{K}(|x-y|)|u(y)|^p\,dy.$$
Note that $|x|\leq R/2$ on $\overline{\mathcal{B}}$ and $|\xi| \leq R$ on $\mathcal{B}$, where $\overline{\mathcal{B}}=\left\{x\in\mathbb{R}^n;\,\,|x|\leq R/2\right\}$,  then
$$|x-\xi|^{n-\alpha}\leq 2^{n-\alpha} R^{n-\alpha},\quad\hbox{for all}\,\,x\in\overline{\mathcal{B}},\,\xi\in\mathcal{B},$$
which implies that
$$\mathcal{K}(|x-y|)\geq \mathcal{K}(2R),\quad\hbox{for all}\,\,x\in\overline{\mathcal{B}},\,\xi\in\mathcal{B},$$
and for all $R\gg1$, namely $2R>R_0$. So,
$$(\mathcal{K}\ast|u|^p)(t,x)\geq  \mathcal{K}(2R)\int_{\mathcal{B}}|u(y)|^p\,dy,\quad\hbox{for all}\,\,t\in(0,T),\,\,x\in\overline{\mathcal{B}}.$$
Then
\begin{eqnarray}\label{111}
\int_0^T\int_{\mathcal{B}}(\mathcal{K}\ast|u|^p) \psi(t,x)\,dx\,dt&\geq& \mathcal{K}(2R)\int_0^T\int_{\overline{\mathcal{B}}}\int_{\mathcal{B}}(u(t,\xi))^p\psi(t,x)\,d\xi\,dx\,dt\nonumber\\
&\geq&\hbox{meas$(\overline{\mathcal{B}})$}\mathcal{K}(2R)\int_0^T\int_{\mathcal{B}}(u(t,\xi))^p\psi(t,\xi)\,d\xi\,dt\nonumber\\
&=&CR^{n}\mathcal{K}(2R)\,\textbf{I},
\end{eqnarray}
where we have used the fact that $\varphi_R\equiv 1$ on $\overline{\mathcal{B}}$, and $1\geq \varphi_R$ on $\mathcal{B}$.  Combining \eqref{900} and \eqref{111}, we infer that
\begin{equation}\label{101}
CR^{n}\mathcal{K}(2R)\,\textbf{I}+\int_{\mathcal{B}}u_0(x)\varphi^{\ell}_R(x)\,dx \leq C\,T^{\frac{p-1}{p}}R^{\frac{n(p-1)}{p}-\beta}\textbf{I}^{\frac{1}{p}}+C R^{\frac{n(p-1)}{p}}T^{-\frac{1}{p}}\textbf{J}^{\frac{1}{p}}.
\end{equation}
By choosing $R=T^{\frac 1\beta}$, it follows from \eqref{101} and using $\textbf{J}\leq \textbf{I}$ and $u_0\geq0$, that
$$
T^{\frac{n}{\beta}}\mathcal{K}(2T^{\frac{1}{\beta}})\textbf{I}\leq C\,T^{\frac{n(p-1)-\beta}{\beta p}}\textbf{I}^{\frac{1}{p}},
$$
that is,
\begin{equation}\label{141}
\textbf{I}^{\frac{p-1}{p}}\leq \frac{C}{\mathcal{K}(2\,T^{\frac 1\beta})\,T^{\frac{n+\beta}{\beta p}}}.
\end{equation}
As
$$\limsup_{R\rightarrow\infty}\left(\mathcal{K}(R)\,R^{\frac{n+\beta}{p}}\right)>0,$$
there exists a sequence $\{R_j\}_j$ such that 
\begin{equation}\label{161}
R_j\rightarrow+\infty\qquad\hbox{and}\qquad \mathcal{K}(R_j)\,R_j^{\frac{n+\beta}{p}}\longrightarrow\ell>0,\quad\hbox{as}\,\,j\rightarrow\infty.
\end{equation}
Without loss of generality, we may assume that $R_j>R_{j-1}$ for all $j>1$.\\
\noindent {\bf If $\ell=\infty$}, replacing $T$ by $(R_j/2)^\beta$, we have
$$
\textbf{I}^{\frac{p-1}{p}}\leq \frac{C}{\mathcal{K}(R_j)\,R_j^{\frac{n+\beta}{ p}}}.
$$
Consequently, passing to the limit when $j\rightarrow\infty$, using \eqref{161}, and the monotone convergence theorem,  we arrive at 
$$\int_0^\infty\int_{\mathbb{R}^n}u^p\,dx\,dt\leq 0,$$
that is $u=0$ a.e., which, using \eqref{101}, $u_0\in L^1(\mathbb{R}^n)$, and the dominated convergence theorem, implies that
$$0<\int_{\mathbb{R}^n}u_0(x)\,dx\leq 0;$$
contradiction.\\
\noindent {\bf If $\ell<\infty$}, by setting
 $$R=\frac{R_j}{2}\qquad \hbox{and}\qquad T=\left(\frac{R_j}{2}\right)^\beta K^{-1},$$
  where $K\ge 1$ and $K<R_j$, so that $R_j$ and $K$ cannot simultaneously tend to infinity, using $u_0\geq0$, we conclude from \eqref{101} that 
$$\textbf{I}\leq C\frac{K^{\frac{1-p}{p}}}{\mathcal{K}(R_j)R_j^{\frac{n+\beta}{p}}}\textbf{I}^{\frac{1}{p}}+C\frac{K^{\frac{1}{p}}}{\mathcal{K}(R_j)R_j^{\frac{n+\beta}{p}}}\textbf{J}^{\frac{1}{p}}.$$
By Young's inequality, we conclude that
$$\textbf{I}\leq C\frac{K^{-1}}{\left(\mathcal{K}(R_j)R_j^{\frac{n+\beta}{p}}\right)^{\frac{p}{p-1}}}+C\frac{K^{\frac{1}{p}}}{\mathcal{K}(R_j)R_j^{\frac{n+\beta}{p}}}\textbf{J}^{\frac{1}{p}}.$$
Since $\ell<\infty$, then \eqref{141} shows that $u\in L^p((0,\infty),L^p(\mathbb{R}^{n}))$. This implies that $\textbf{J} \longrightarrow 0$, when $j\rightarrow\infty$. Again, passing to the limit when $j\rightarrow\infty$, using \eqref{161} and $\ell\in(0,\infty)$, along with the Lebesgue dominated convergence theorem, we conclude that
	$$\int_0^\infty\int_{\mathbb{R}^n}u^p(t,x)\,dx\,dt\lesssim K^{-1}.$$
	Therefore, taking a sufficiently large $K$ we obtain the desired contradiction.\\
   \noindent (ii) Repeating exactly the same steps as in case (i), we obtain
   $$
C\,\mathcal{K}(2R)\textbf{I}+R^{-n}\int_{\mathcal{B}}u_0(x)\varphi^{\ell}_R(x)\,dx \leq CT^{\frac{p-1}{p}}R^{-\frac{n}{p}-\beta}\textbf{I}^{\frac{1}{p}}+CR^{-\frac{n}{p}}T^{-\frac{1}{p}}\textbf{J}^{\frac{1}{p}}.
$$
Since $\mathcal{K}(2R)\geq \mathcal{K}(R/2)$ whenever $2R>R_0$, it follows that
 $$
C\,\mathcal{K}(R/2)\textbf{I}+R^{-n}\int_{\overline{\mathcal{B}}}u_0(x)\,dx \leq CT^{\frac{p-1}{p}}R^{-\frac{n}{p}-\beta}\textbf{I}^{\frac{1}{p}}+CR^{-\frac{n}{p}}T^{-\frac{1}{p}}\textbf{J}^{\frac{1}{p}}.
$$
Therefore,
    \begin{equation}\label{101A}
C\,\textbf{I}+R^{-n}(\mathcal{K}(R/2))^{-1}\int_{\overline{\mathcal{B}}}u_0(x)\,dx \leq C\frac{T^{\frac{p-1}{p}}R^{-\frac{n}{p}-\beta}}{\mathcal{K}(R/2)}\textbf{I}^{\frac{1}{p}}+C\frac{R^{-\frac{n}{p}}T^{-\frac{1}{p}}}{\mathcal{K}(R/2)}\textbf{J}^{\frac{1}{p}}.
\end{equation}
By choosing $R=T^{\frac 1\beta}$, it follows from \eqref{101A}, together with the condition $\textbf{J}\leq \textbf{I}$ and the assumption on the initial data $u_0$, that
\begin{equation}\label{141A}
\textbf{I}^{\frac{p-1}{p}}\leq \frac{C}{\mathcal{K}(T^{\frac 1\beta}/2)\,T^{\frac{n+\beta}{\beta p}}}.
\end{equation}
We conclude the desired result by proceeding in the same manner as in case (i).\\
	\noindent (iii) In this case, as $R>1$, we have the following estimate
\begin{align*}
\int_{\mathcal{B}}u_0(x)\varphi^\ell_{R}(x)\,dx&\geq \int_{\overline{\mathcal{B}}}u_0(x)\,dx\\&\geq\overline{ \varepsilon} \int_{\overline{\mathcal{B}}}(1+|x|^2)^{-\gamma/2}\,dx\\&\geq \overline{\varepsilon}\,C \int_{\overline{\mathcal{B}}}\left(R^{2}+ R^{2}\right)^{-\gamma/2}\,dx\\&=\overline{\varepsilon}\,C R^{n-\gamma}.
\end{align*}
Therefore, by repeating the same calculation as in case (i) (see \eqref{101}), choosing $R=T^{ 1/\beta}$, and using $\textbf{J}\leq \textbf{I}$, we get
$$
T^{\frac{n}{\beta}}\mathcal{K}(2T^{\frac{1}{\beta}})\textbf{I}+\overline{\varepsilon}\,C T^{\frac{n-\gamma}{\beta}}\leq C\,T^{\frac{n(p-1)-\beta}{\beta p}}\textbf{I}^{\frac{1}{p}},
$$
that is,
\begin{align*}
\textbf{I}+\frac{\overline{\varepsilon}\,C}{T^{\frac{\gamma}{\beta}}\mathcal{K}(2T^{\frac{1}{\beta}})}& \leq \frac{C}{T^{\frac{n+\beta}{\beta p}}\mathcal{K}(2T^{\frac{1}{\beta}})}\textbf{I}^{\frac{1}{p}}\\&\leq \frac{C}{\left(T^{\frac{n+\beta}{\beta p}}\mathcal{K}(2T^{\frac{1}{\beta}})\right)^{\frac{p}{p-1}}}+\frac{1}{2}\textbf{I},
\end{align*}
where we have used the following Young's inequality
$$ab\leq \frac{1}{2}a^{p}+Cb^{\frac{p}{p-1}}.$$
This implies that
$$
\overline{\varepsilon}\lesssim \left(\mathcal{K}(2R)^{-1}R^{\gamma(p-1)-n-\beta}\right)^{\frac{1}{p-1}},
$$
and so,
$$
0<\overline{\varepsilon}\lesssim \left(\liminf_{R\rightarrow\infty}\left(\mathcal{K}(R)^{-1}R^{\gamma(p-1)-n-\beta}\right)\right)^{\frac{1}{p-1}}=0;
$$
contradiction. The proof is complete.\end{proof}

%%%%%%%%%%%%%%%%%%%%%%%%%%%%%%%%%%%%%%%%%%%%%%%%%%%%%%%%%%%%%%%%%%%%%%

%%%%%%%%%%%%%%%%%%%%%%%%%%%%%%%%%%%%%%%%%%%%%%%%%%%%%%%%%%%%%%%%%%%%%%

\section{Global existence}

\begin{proof}[Proof of Theorem \ref{global}-(ii)] Since $p>p_{\mathrm{Fuj}}(n,\beta,\alpha)$, it is possible to choose a positive constant $q>0$ such that
\begin{equation}\label{estiA}
    \frac{\beta+\alpha}{\beta(p-1)}-\frac{1}{p}<\frac{n}{\beta
    q}< \frac{\beta+\alpha}{\beta(p-1)}\quad \text{with} \quad q> p.
\end{equation}
From this, it follows that 
\begin{equation}\label{estiB}
    q>\frac{n(p-1)}{\beta+\alpha}=q_{\mathrm{sc}}>1.
\end{equation}
We then define
\begin{equation}\label{estiC}\begin{split}
    \beta^*:&=\frac{n}{\beta q_{\mathrm{sc}}}-\frac{n}{\beta
    q}\\&=\frac{\beta+\alpha}{\beta(p-1)}-\frac{n}{\beta
    q}.\end{split}
\end{equation}
Therefore, based on relations (\ref{estiA})-(\ref{estiC}), the following relations hold
\begin{equation}\label{estiD}
    \beta^*>0,\qquad 1-\frac{n(p-1)}{\beta q}+\frac{\alpha}{\beta} -(p-1)\beta^*=0,\qquad\hbox{and}\qquad p\beta^*<1.
\end{equation}
 Since $u_0\in L^{q_{\mathrm{sc}}}(\mathbb{R}^{n})$, applying Lemma \ref{Lp-Lqestimate} with $q>q_{\mathrm{sc}}$, and using \eqref{estiC}, we
get
\begin{equation}\label{estiE}
    \sup_{t>0}t^{\beta^*}\|S_{\beta}(t)u_0\|_{L^q}\leq
   C \|u_0\|_{L^{q_{\mathrm{sc}}}}=:\rho<\infty.
\end{equation}
Set
\begin{equation}\label{estiF}
    \mathbb{X}:=\left\{u\in
    L^\infty((0,\infty),L^q(\mathbb{R}^{n}));\;\sup_{t>0}t^{\beta^*}\|u(t)\|_{L^q}\leq\delta\right\},
\end{equation}
where $\delta>0$ is chosen to be sufficiently small. For $u,v\in \mathbb{X}$, we define the metric
\begin{equation}\label{estiG}
    d_{\mathbb{X}}(u,v):=\sup_{t>0}t^{\beta^*}\|u(t)-v(t)\|_{L^q}.
\end{equation}
It is straightforward to verify that $(\mathbb{X},d)$ is a nonempty complete metric space. For $u\in \mathbb{X}$, we define the mapping $\Phi(u)$ by
\begin{equation}\label{estiH}
    \Phi(u)(t):=S_{\beta}(t)u_0 + \int_{0}^{t}S_{\beta}(t-\tau) I_\alpha(|u|^{p})(\tau) \,\mathrm{d}\tau,\quad \text{for all}\,\, t\geq0.
\end{equation}
Let us now verify that the operator $\Phi:  \mathbb{X} \rightarrow \mathbb{X}$. By employing inequalities \eqref{estiE} and \eqref{estiF}, along with Lemma \ref{Lp-Lqestimate}, we derive the following estimate for any $u \in  \mathbb{X}$, 
$$
    t^{\beta^*}\|\Phi(u)(t)\|_{L^q}\leq \rho+\,Ct^{\beta^*}\int_0^t(t-\tau)^{-\frac{n}{\beta}\left(\frac{1}{r_1}-\frac{1}{q}\right)}\|I_\alpha(|u|^p)(\tau)\|_{L^{r_1}}\,d\tau,
   $$
   for any $1<r_1<q$.  By employing the assumption $q>p$, and using Lemma \ref{Hardy} with $p/q=\alpha/n+1/r_1$, we get
\begin{eqnarray}\label{estiI}
    t^{\beta^*}\|\Phi(u)(t)\|_{L^q}&\leq& \rho+\,Ct^{\beta^*}\int_0^t(t-\tau)^{-\frac{n}{\beta}\left(\frac{p-1}{q}-\frac{\alpha}{n}\right)}\|u(\tau)\|^{p}_{L^{q}}\,d\tau\nonumber\\
    &\leq&\rho+\,C\delta^{p} t^{\beta^*}\int_0^t(t-\tau)^{-\frac{n}{\beta}\left(\frac{p-1}{q}-\frac{\alpha}{n}\right)}\tau^{-\beta^* p}\,d\tau.
\end{eqnarray}
Now, using the parameter constraints in \eqref{estiA} and \eqref{estiD}, together with the condition $p\beta^*<1$, the integral becomes
\begin{equation}\label{estiJ}
\int_0^t(t-\tau)^{-\frac{n}{\beta}\left(\frac{p-1}{q}-\frac{\alpha}{n}\right)}\tau^{-\beta^* p}\,d\tau=C t^{-\beta^*},
\end{equation}
valid for all $t\geq0$. It then follows from estimates \eqref{estiI} and \eqref{estiJ} that
\begin{equation}\label{estiK}
    t^{\beta^*}\|\Phi(u)(t)\|_{L^q}\leq \rho+C\delta^p.
\end{equation}
Hence, if $\rho$ and $\delta$ are chosen sufficiently small such that $\rho+C\delta^p\leq\delta$, it follows that $\Phi(u)\in\mathbb{X}$, that is, $\Phi: \mathbb{X}\rightarrow \mathbb{X}$. A similar argument shows that, under the same smallness assumptions on $\rho$ and $\delta$, the operator $\Phi$ is a strict contraction. Therefore, it has a unique fixed point $u\in \mathbb{X}$ which corresponds to a mild solution of problem \eqref{44}.

We now aim to prove that $u\in L^\infty((0,\infty),L^\infty(\mathbb{R}^{n}))$. We begin by showing that $u\in L^\infty((0,T),L^\infty(\mathbb{R}^{n}))$ for some sufficiently small $T>0$. Indeed, the previous argument ensures uniqueness in the space $ \mathbb{X}_T,$ where for any $T>0,$
$$
 \mathbb{X}_T:=\left\{u\in
    L^\infty((0,T),L^q(\mathbb{R}^{n}));\;\sup_{0<t<T}t^{\beta^*}\|u(t)\|_{L^q}\leq\delta\right\}.
$$
Let $\tilde{u}$ denote the local solution of \eqref{44} established in Theorem \ref{localexistence}. From inequality \eqref{estiB}, we know that $q_{\mathrm{sc}}<q<\infty$, which implies 
$$u_0\in L^\infty(\mathbb{R}^{n})\cap L^{q_{\mathrm{sc}}}(\mathbb{R}^{n})\subset L^\infty(\mathbb{R}^{n})\cap L^{q}(\mathbb{R}^{n}).$$ 
Moreover, using $p>p_{\mathrm{Fuj}}(n,\beta,\alpha)$, we have $$n/(n-\alpha)<q_{\mathrm{sc}}<q<n(p-1)/\alpha.$$ Thus, Theorem \ref{localexistence}  guarantees that $\tilde{u}\in L^\infty((0,T_{\max}),L^\infty(\mathbb{R}^{n})\cap L^q(\mathbb{R}^{n}))$. Consequently, for sufficiently small $T>0$, we have
$$\sup\limits_{t\in(0,T)}t^{\beta^*}\|\tilde{u}(t)\|_{L^q}\leq\delta.$$
Due to the uniqueness of solutions in $\mathbb{X}_T$, it follows that $u=\tilde{u}$ on
$[0,T]$, leading to the conclusion that $u\in L^\infty((0,T),L^\infty(\mathbb{R}^{n})\cap L^q(\mathbb{R}^{n}))$.

To extend the regularity to $[T,\infty)$, we employ a bootstrap argument. For $t>T,$ we express $u(t)$ as
\begin{eqnarray*}
  u(t)-S_{\beta}(t)u_0 &=&
  \int_0^TS_{\beta}(t-\tau)I_\alpha(|u|^{p})(\tau)\,d\tau+\int_T^tS_{\beta}(t-\tau)I_\alpha(|u|^{p})(\tau)\,d\tau\\
   &\equiv& I_1(t)+I_2(t).
\end{eqnarray*}
Since $u\in L^\infty((0,T),L^\infty(\mathbb{R}^{n})\cap L^q(\mathbb{R}^{n}))$, and $n/(n-\alpha)<q<n(p-1)/\alpha$, we immediately have $I_1\in
L^\infty((T,\infty),L^\infty(\mathbb{R}^{n})).$ Furthermore, by similar estimates used in the fixed-point argument and noting that $$t^{-\beta^*}\leq
T^{-\beta^*}<\infty,$$ we also get $I_1\in L^\infty((T,\infty),L^q(\mathbb{R}^{n}))$.
Next, from \eqref{estiB}, we observe that $q>q_{\mathrm{sc}}$, which guarantees the existence of some $r\in(q,\infty]$ such that
\begin{equation}\label{estiL}
\frac{n}{\beta}\left(\frac{p}{q}-\frac{\alpha}{n}-\frac{1}{r}\right)<1.
\end{equation}
For $T<t$, since $u\in
L^\infty((0,\infty),L^q(\mathbb{R}^{n})),$ we obtain
$$u^p\in L^\infty((T,t),L^{\frac{q}{p}}(\mathbb{R}^{n})).$$
 Thus, applying Lemma \ref{Lp-Lqestimate}, condition \eqref{estiL}, and Lemma \ref{Hardy} with $p/q=\alpha/n+1/r_1$, we deduce that $I_2\in
L^\infty((T,\infty),L^r(\mathbb{R}^{n}))$.
Since the terms $S_{\beta}(t)u_0$ and $I_1$ belong to
$$L^\infty((T,\infty),L^\infty(\mathbb{R}^{n}))\cap
L^\infty((T,\infty),L^q(\mathbb{R}^{n}))\subseteq L^\infty((T,\infty),L^r(\mathbb{R}^{n})),$$ we conclude that $u\in
L^\infty((T,\infty),L^r(\mathbb{R}^{n}))$. Iterating this procedure a finite
number of times, eventually leads to $u\in
L^\infty((T,\infty),L^\infty(\mathbb{R}^{n}))$. This completes the proof. 
\end{proof}

%%%%%%%%%%%%%%%%%%%%%%%%%%%%%%%%%%%%%%%%%%%%%%%%%%%%%%%%%%%%%%%%

%%%%%%%%%%%%%%%%%%%%%%%%%%%%%%%%%%%%%%%%%%%%%%%%%%%%%%%%%%%%

%\backmatter
%\bmsection*{Author contributions}

%This is an author contribution text. This is an author contribution text. This is an author contribution text. This is an author contribution text. This is an author contribution text.

\section*{Acknowledgments}
The authors are deeply grateful to Professor Enzo Mitidieri (Italy) and Professor Manuel Del Pino (UK) for their valuable comments and advice, which significantly improved the content of the Introduction. They also wish to thank Professor Philippe Souplet (France) for his insightful recommendations on references concerning parabolic equations with time-nonlocal nonlinearities.

\section*{Funding} Ahmad Fino is supported by the Research Group Unit, College of Engineering and Technology, American University of the Middle East. Berikbol T. Torebek is supported by the Science Committee of the Ministry of Education and Science of the Republic of Kazakhstan (Grant No. AP26195417)

\section*{Declaration of competing interest} The authors declare that there is no conflict of interest.

\section*{Data Availability Statements} The manuscript has no associated data.

%\bibliography{wileyNJD-AMA}

\begin{thebibliography}{10} 

\bibitem{Baras-Pierre} P. Baras, M. Pierre, Crit\`ere d'existence de solutions positives pour des \'equations semi-lin\'eaires non monotones, 
{\it Annales de l'Institut Henri Poincar\'e C, Analyse non lin\'eaire}, 2:3, (1985), 185--212.

\bibitem{Baras} P. Baras, R. Kersner, Local and global solvability of a class of semilinear parabolic equations, {\it J. Diff. Equations}, 68 (1987), 238--252.

\bibitem{BonforteVazquez} M. Bonforte, J.L. V\'{a}zquez, Quantitative local and global a priori estimates for fractional nonlinear diffusion equations, {\it Adv. Math.}, 250 (2014), 242--284.

\bibitem {CH}  T. Cazenave, A. Haraux, \textit{Introduction aux probl\`emes
d'\'evolution semi-lin\'eaires}, Ellipses, Paris, (1990).

\bibitem{CDW} T. Cazenave, F. Dickstein, F. D. Weissler, An equation
whose Fujita critical exponent is not given by scaling, {\it Nonlinear
Anal.}, 68 (2008), 862--874.

\bibitem{DaoFino} T.A. Dao, A. Z. Fino, Critical exponent for semi-linear structurally damped wave equation of derivative type, {\it Math. Methods Appl. Sci.}, 43:17 (2020), 9766--9775.



\bibitem{DaoReissig} T.A. Dao, M. Reissig, A blow-up result for semi-linear structurally damped $\sigma$-evolution equations, preprint on arXiv:1909.01181v1, 2019.

\bibitem{Filippucci1} R. Filippucci, M. Ghergu, Fujita type results for quasilinear parabolic inequalities with nonlocal terms, {\it Discrete Contin. Dyn. Syst. A}, 42 (2022), 1817--1833.

\bibitem{Filippucci2} R. Filippucci, M. Ghergu, Higher order evolution inequalities with nonlinear convolution terms, {\it Nonlinear Anal.}, 221 (2022), 112881.

\bibitem{Fino5} A. Z. Fino, M. Kirane, Qualitative properties of solutions to a time-space fractional
evolution equation, {\it Quart. Appl. Math.}, 70:1 (2012), 133--157.

\bibitem{FinoKiraneBK} A. Z. Fino, M. Kirane, B. Barakeh, S. Kerbal, Fujita type results for a parabolic inequality with a nonlinear convolution term on the Heisenberg group, {\it Math. Methods Appl. Sci.}, 48:13 (2025), 12863--12871.

\bibitem{FT25} A. Z. Fino, B. T. Torebek, Fujita-type results for parabolic equations with Hartree-type nonlinearities, {\it ArXiv}, (2025), arXiv:2510.11648.

\bibitem{FinoViana} A. Z. Fino, A. Viana, Local and global solvability of the Grushin heat equation with mixed nonlinear memory and reaction terms, arXiv:2507.13547.

\bibitem{Fuj} H. Fujita, On the blowing up of solutions of the problem for $u_t=\Delta u+u^{1+\alpha},$ {\it J. Fac. Sci. Univ. Tokyo}, 13 (1966), 109--124.

\bibitem{Guedda} M. Guedda, M. Kirane, A note on nonexistence of global solutions to a nonlinear integral equation, {\it Bull. Belg. Math. Soc. Simon Stevin}, 6 (1999), 491--497.


\bibitem{Kirane-Guedda} M. Guedda, M. Kirane, Criticality for Some Evolution Equations, {\it Differ. Equ.}, 37:4 (2001), 540--550.

\bibitem{11}  K. Hayakawa, On nonexistence of global solutions of
some semilinear parabolic differential equations, {\it Proc. Japan Acad.}, 4 (1973), 503--505.

\bibitem{KFTS} M. Kirane, A. Z. Fino, B. T. Torebek, Z. Sabbagh, Cazenave-Dickstein-Weissler-type extension of Fujita’s problem on Heisenberg groups, {\it Math. Methods Appl. Sci.}, 49:2 (2026), 499--511.

\bibitem{Kwanicki} M. Kwa\'{s}nicki, Ten equivalent definitions of the fractional laplace operator, {\it Fract. Calc. Appl. Anal.}, 20 (2017), 7--51.

\bibitem{Landkof}  N. S. Landkof, {\it Foundations of modern potential theory}, translated by A. P.
Doohovskoy, Grundlehren der mathematischen Wissenschaften, Springer, New York-Heidelberg, 1972.


\bibitem{LiebLoss} E. H. Lieb, M. Loss,  {\it Analysis}, 2nd ed., Graduate Studies in Mathematics, vol. 14,
American Mathematical Society, Providence, RI, 2001

\bibitem{Loayza} M. Loayza, Global existence and blow up results for a heat equation with nonlinear nonlocal term, {\it Differential Integral Equations}, 25(7–8) (2012), 665--683.

 \bibitem{PM1} E. Mitidieri, S. I. Pohozaev,  A priori estimates and blow-up of solutions to nonlinear partial differential equations and inequalities, {\it Proc. Steklov. Inst. Math.}, 234 (2001), 1--383.

\bibitem{PM2} E. Mitidieri, S. I. Pohozaev, Liouville theorems for some classes of nonlinear nonlocal problems, {\it Proc. Steklov Inst. Math.}, 248 (2005) 164--185.

\bibitem{Naga} M. Nagasawa, T. Sirao, Probabilistic treatment of the blowing up of solutions for a non-linear integral equation, {\it Trans. Am. Math. Soc.}, 139 (1969) 301--310.
 
  \bibitem{QS} P. Quittner and P. Souplet; Superlinear parabolic problems, Blow-up, global existence and steady sates, Birkhauser Verlag AG, 2007
 
 \bibitem{Samko} S. G. Samko, {\it Hypersingular integrals and their applications, Analytical Methods and
Special Functions}, vol. {\bf 5}, Taylor \& Francis, Ltd., London, 2002.
 
\bibitem{Silvestre} L. Silvestre, Regularity of the obstacle problem for a fractional power of the Laplace operator, {\it Comm. Pure Appl. Math.}, 60(1) (2007), 67--112.
 
\bibitem{Stein} E. M. Stein, {\it Singular integrals and differentiability properties of functions}, Princeton
Mathematical Series, No. {\bf 30}, Princeton University Press, Princeton, N.J., 1970
 
 \bibitem{22}  S. Sugitani, On nonexistence of global solutions for some nonlinear integral equations, {\it Osaka Math. J.}, 12 (1975) 45--51.

\bibitem{Wes81}  F.B. Weissler, Existence and nonexistence of global solutions for a semilinear heat equation, {\it Isr. J. Math.} 38 (1981) 29--40.

\bibitem{Zhang1} Q. S. Zhang, A new critical phenomenon for semilinear parabolic problems. {\it J. Math. Anal. Appl.}, 219 (1998), 125--139.

\bibitem{Zhang} Q. S. Zhang, A critical behavior for some semilinear parabolic equations involving sign changing solutions, {\it Nonlinear Anal.}, 50 (2002) 967--980.

 \end{thebibliography}

%\bmsection*{Supporting information}

%Additional supporting information may be found in the
%online version of the article at the publisher’s website.

\end{document}